\numberwithin{equation}{section}
\newtheorem{Theorem}{Theorem}[section]
\newtheorem{Cor}[Theorem]{Corollary}
\newtheorem{Prop}[Theorem]{Proposition}
\newtheorem{Lem}[Theorem]{Lemma}
\newtheorem{conj}[Theorem]{Conjecture}
\theoremstyle{remark}
\newtheorem{Rem}[Theorem]{Remark}
\newtheorem{Example}[Theorem]{Example}
\theoremstyle{definition}
\newtheorem{Defn}[Theorem]{Definition}
\newcommand{\coloneqq}{\mathrel{\mathop:}=}
\renewcommand{\epsilon}{\varepsilon}
\newcommand{\R}{\mathbb{R}}
\newcommand{\N}{\mathbb{N}}
\newcommand{\E}{\mathbb{E}}
\renewcommand{\P}{\mathds{P}}
\newcommand{\vect}[1]{\textbf{#1}}
\begin{document}

\title[Quantiles of Wasserstein distance]{Bounding Quantiles of Wasserstein distance between true and empirical measure}

\author[Cohen, Tegn\'er and Wiesel]{Samuel N. Cohen, Martin N.A. Tegn\'er and Johannes C.W. Wiesel}

\maketitle

\begin{abstract}
Consider the empirical measure, $\hat{\P}_N$, associated to  $N$ i.i.d. samples of a given probability distribution $\P$ on the unit interval. For fixed $\P$ the Wasserstein distance between $\hat{\P}_N$ and $\P$ is a random variable on the sample space $[0,1]^N$. Our main result is that its normalised quantiles are asymptotically maximised when $\P$ is a convex combination between the uniform distribution supported on the two points $\{0,1\}$ and the uniform distribution on the unit interval $[0,1]$. This allows us to obtain explicit asymptotic confidence regions for the underlying measure $\P$.

We also  suggest extensions to higher dimensions with numerical evidence.
\end{abstract}

MSC: 60F05, 62G15, 60E15

Keywords: Wasserstein distance, confidence interval, central limit theorem, stochastic dominance, empirical measure, concentration inequalities

\section{Notation and main result}\label{sec:main}
Many problems in statistics depend on understanding how close the empirical distribution of a sample is from its generating distribution. In this paper, we show that, for measures on $[0,1]$ with distances measured by the $1$-Wasserstein metric, the quantiles of this (random) distance are maximised by a specific family of sampling distributions. This yields a tight upper bound on the law of the Wasserstein distance between the true and empirical measures.

Let $\mathfrak{P}([0,1])$ denote the space of probability measures on $[0,1]$. For two measures  $\P, \tilde{\P}\in \mathfrak{P}([0,1])$ we denote the $1$-Wasserstein distance by
\begin{align*}
\mathcal{W}(\P,\tilde{\P})\coloneqq\inf_{\pi\in \Pi(\P,\tilde{\P})} \left( \int_{\R} |x-y|\pi(dx,dy)\right),
\end{align*}
where $\Pi(\P,\tilde{\P})$ is the set of couplings $\pi\in \mathfrak{P}([0,1]^2)$  with marginals $\P$ and $\tilde{\P}$.
For independent samples $X_1, \dots, X_N$ distributed according to $\P$ we define the empirical measure $$\hat{\P}_N\coloneqq\frac{1}{N}\sum_{k=1}^N \delta_{X_k},$$ where $\delta_{X_k}$ denotes the Dirac measure at $X_k$. As $\hat{\P}_N$ depends on the realisations $X_1,\dots, X_N$, the observed distance $\mathcal{W}(\hat{\P}_N,\P)$ is a random variable with domain $[0,1]^N$. We write $\P^\otimes= \P\otimes \P\otimes \cdots$ for the product measure on $[0,1]^\N$, the space of all sequences of observations. 

Denote weak convergence of a sequence of measures $(\P_N)_{N\in \N}$ to a measure $\P\in\mathfrak{P}([0,1])$ by $\P_N\Rightarrow\P$, and similarly $Y_N\Rightarrow Y$ for convergence in law (under $\P^\otimes$) of random variables. While the well-known Glivenko--Cantelli theorem gives the weak convergence $\hat{\P}_N\Rightarrow\P$ ($\P^\otimes$-a.s.), the problem of controlling the distance between $\hat{\P}_N$ and $\P$ has a long history in probability theory,  statistics and many related fields, see for example \cite{dudley1969speed, delattre2004quantization, pages2012optimal, weed2017sharp, ajtai1984optimal, talagrand1994transportation, biau2008performance, laloe20101, roberts1997shift, bolley2007quantitative, fournier2015rate, fournier2016rate} and the references therein for recent results in quantisation, particle systems and MCMC to name just a few. Building on the results of Dereich, Scheutzow and Schottstedt \cite{dereich2013constructive}, non-asymptotic concentration inequalities of the form
\begin{align*}
\P^{\otimes}\left( \mathcal{W}(\hat{\P}_N,\P)\ge t\right) \le C\exp(-cNt^{2}) \qquad \forall \P\in \mathfrak{P}[0,1]
\end{align*} 
were recently obtained by Fournier and Guillin \cite{fournier2015rate}, however the constants $c,C$ are not explicit. In statistical applications, the  values of these constants along with their (non-)optimality are of importance, for example, for the calculation of confidence intervals.

In this article we try to shed light on this matter by investigating the distribution of $\mathcal{W}(\hat{\P}_N,\P)$ for large $N\in \N$. Specifically, we aim to determine the probability measure $\P$ that maximises the quantile $$F^{-1}_{\mathcal{W}(\hat{\P}_N,\P)}(\alpha)\coloneqq\inf\big\{ x\in [0,1]\ |\ \P^{\otimes}(\mathcal{W}(\hat{\P}_N,\P)\le x)\ge \alpha\big\}$$ for a given confidence level $\alpha\in [0,1]$, i.e.
\begin{align}\label{eq::argmax}
\underset{\P\in \mathfrak{P}([0,1])}{\text{argmax}} F^{-1}_{\mathcal{W}(\hat{\P}_N,\P)}(\alpha).
\end{align}
Our main result is the following:

\begin{Theorem}\label{Theorem::1}
Let $\alpha\in[0,1]$ and let $\ \mathcal{U}([0,1])$ denote the uniform distribution on $[0,1]$. Take $\epsilon>0$. Then there exists $\lambda=\lambda(\alpha, \epsilon)\in[0,1]$ such that 
\begin{align}\label{eq:main}
\P^{\lambda}\coloneqq\lambda \frac{\delta_0 +\delta_1}{2}+(1-\lambda)\ \mathcal{U}([0,1])
\end{align}
asymptotically maximises the normalised quantile $N^{1/2}F^{-1}_{\mathcal{W}(\hat{\P}_N,\P)}(\alpha)$, i.e. there exists $N_0\in \N$ such that for all $N\ge N_0$  
\begin{equation}\label{eq:quantile1}\sup_{\P\in\mathfrak{P}[0,1]}F^{-1}_{\mathcal{W}(\hat{\P}_N,\P)}(\alpha)-F^{-1}_{\mathcal{W}(\hat{\P}_N,\P^{\lambda})}(\alpha)\le \frac{\epsilon}{N^{1/2}}.\end{equation}
Furthermore $\lim_{\alpha\downarrow 0}\lambda(\alpha, \epsilon)=0$ and $\lim_{\alpha\uparrow 1}\lambda(\alpha, \epsilon)=1$ for all $\epsilon>0$.
\end{Theorem}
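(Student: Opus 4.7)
My plan is to use Donsker's theorem to reduce the problem to a functional of a standard Brownian bridge, and then analyse the resulting variational problem over probability measures on $[0,1]$.

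\emph{Step 1 (Donsker reduction).} In one dimension one has the explicit formula $\mathcal{W}(\hat{\P}_N,\P)=\int_0^1 |F_{\hat{\P}_N}(x)-F_\P(x)|\,dx$. Donsker's invariance principle gives $\sqrt{N}(F_{\hat{\P}_N}-F_\P)\Rightarrow B\circ F_\P$ in the Skorokhod space $D([0,1])$, where $B$ is a standard Brownian bridge on $[0,1]$. Since $f\mapsto\int_0^1|f|\,dx$ is continuous on $D([0,1])$, the continuous mapping theorem yields
\begin{equation*}
\sqrt{N}\,\mathcal{W}(\hat{\P}_N,\P)\;\Rightarrow\;V_{\mu_\P}:=\int_0^1 |B(u)|\,\mu_\P(du),
\end{equation*}
where $\mu_\P$ denotes the law of $F_\P(U)$ for $U\sim\mathcal{U}([0,1])$ (the push-forward of Lebesgue measure under $F_\P$). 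A direct computation shows $\mu_{\P^\lambda}=\mathcal{U}([\lambda/2,1-\lambda/2])$, which interpolates between $\mathcal{U}([0,1])$ (at $\lambda=0$) and $\delta_{1/2}$ (at $\lambda=1$), giving $V_{\mu_{\P^\lambda}}=\frac{1}{1-\lambda}\int_{\lambda/2}^{1-\lambda/2}|B(u)|\,du$ for $\lambda\in[0,1)$ and $V_{\mu_{\P^1}}=|B(1/2)|$.

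\emph{Step 2 (Quantile transfer).} Since $V_{\mu_\P}$ is always a continuous random variable (its law is a weighted sum/mixture of half-normals), weak convergence lifts to quantile convergence for each fixed $\P$. To obtain the bound \eqref{eq:quantile1} uniformly in $\P$, I would combine a Koml\'os--Major--Tusn\'ady style strong approximation of $F_{\hat{\P}_N}-F_\P$ with a uniform lower bound on the density of $V_{\mu_\P}$ near its $\alpha$-quantile; the residual approximation error is then absorbed into the $\epsilon/\sqrt{N}$ slack.

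\emph{Step 3 (Variational problem).} The theorem reduces to showing that among all probability measures $\mu$ on $[0,1]$, the $\alpha$-quantile of $V_\mu:=\int_0^1|B(u)|\,\mu(du)$ is maximised (up to vanishing error) by some $\mu^\lambda:=\mathcal{U}([\lambda/2,1-\lambda/2])$. Exploiting time-reversal of the bridge, $B(\cdot)\stackrel{d}{=}B(1-\cdot)$, gives $V_\mu\stackrel{d}{=}V_{\sigma\mu}$ with $\sigma\mu(A):=\mu(1-A)$, which lets me reduce to symmetric $\mu$ via a coupling/symmetrisation argument. Within symmetric measures I would then attempt a first-order (KKT-type) analysis on the affine space $\mathfrak{P}([0,1])$, using that pointwise $\mu\mapsto V_\mu(\omega)$ is linear, supplemented by an explicit monotone stochastic-ordering inside the family $\{\mu^\lambda\}_{\lambda\in[0,1]}$, to show that no competing symmetric $\mu$ beats every $\mu^\lambda$ in quantile. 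The asymptotic limits $\lambda(\alpha,\epsilon)\to 0$ as $\alpha\downarrow 0$ and $\lambda(\alpha,\epsilon)\to 1$ as $\alpha\uparrow 1$ then follow from tail asymmetry: Chung's small-ball estimate $\P(\sup_{[0,1]}|B|\le\epsilon)\le C\exp(-\pi^2/(8\epsilon^2))$ makes the lower tail of $V_{\mathcal{U}([0,1])}=\int_0^1|B|\,du$ far lighter than that of $|B(1/2)|$ near zero, while $|B(1/2)|$ is half-normal with the bridge's maximal variance $1/4$ and hence the heaviest Gaussian right tail.

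\emph{Main obstacle.} Step 3 is the crux. One is optimising over the infinite-dimensional simplex $\mathfrak{P}([0,1])$, and $V_\mu$ depends non-linearly on $B$ through $|\cdot|$, so neither convex duality nor a standard rearrangement inequality applies off the shelf. I expect the proof to exploit simultaneously the symmetry of $B$ about $1/2$ and the concavity of the variance profile $u\mapsto u(1-u)$ to single out $\mu^\lambda$ as the extremal shape; careful handling of the degenerate endpoints $\lambda\in\{0,1\}$ will also be needed, since the family $\{\mu^\lambda\}$ is discontinuous in variational distance at $\lambda=1$.
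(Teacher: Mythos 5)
Your Steps 1--2 are in the spirit of the paper: the reduction $N^{1/2}\mathcal{W}(\hat{\P}_N,\P)\Rightarrow\int_0^1|B(F_\P(t))|\,dt$ is exactly the del Barrio--Gin\'e--Matr\'an lemma the paper invokes, and your proposed KMT coupling (applied to the uniform empirical process and composed with $F_\P$) is a plausible alternative to the paper's route for uniformity in $\P$ and $N$, which instead quantises the observations and controls $N^{-1/2}|Z_N-Z_N^{\lfloor n\rfloor}|$ with Talagrand's concentration inequality; since the theorem allows an $\epsilon$-slack you would not even need the density lower bound you mention (an a.s.\ coupling bound transfers directly to quantiles), though the incidental claim that $V_{\mu_\P}$ is always continuous is false for degenerate $\P$ (e.g.\ $\P=\delta_a$ gives $V_{\mu_\P}\equiv 0$).

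The genuine gap is your Step 3, which is the heart of the theorem and which you leave as a plan rather than a proof. Symmetrisation via $B(\cdot)\overset{d}{=}B(1-\cdot)$ and ``a first-order (KKT-type) analysis'' on the simplex cannot work as stated: although $\mu\mapsto V_\mu(\omega)$ is linear pathwise, the $\alpha$-quantile is neither convex nor concave in $\mu$, so a first-order/extreme-point argument would point to vertices of the simplex, whereas the true maximiser is a nondegenerate mixture for intermediate $\alpha$; moreover the family $\{\mu^\lambda\}$ is \emph{not} ordered in the way your ``monotone stochastic-ordering'' step needs --- the paper explicitly remarks that even the componentwise comparison of covariance matrices fails to yield positive semi-definiteness (Remark after Proposition \ref{Prop:1}), and only a directionally-convex (second-order) dominance holds, which does not order quantiles. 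What the paper actually does at this point is restrict to measures on $n$ equidistant atoms, write $F_{\mathfrak{B}_n}(t)$ as an explicit Gaussian integral over the $\ell^1$-ball $\Xi(t)$, and differentiate in the weights: a symmetry/antisymmetry argument in $p_1$ shows the minimiser of $F_{\mathfrak{B}_n}(t)$ satisfies $p_1=p_n$, $p_2=\cdots=p_{n-1}$ and $p_1\ge p_2$, i.e.\ lies in the family $\P^\lambda$ (Proposition \ref{Prop::general}). The boundary behaviour of $\lambda(\alpha)$ also requires real work that your tail heuristics only gesture at: $\alpha\downarrow 0$ is handled by a sign analysis of the derivative of the constrained density integral for small $t$ (Proposition \ref{Prop:2}), and $\alpha\uparrow 1$ by H\"usler--Piterbarg tail asymptotics for Gaussian quadratic forms combined with a Perron--Frobenius bound on the top eigenvalue of $\pmb{\Sigma}_n^{p_1,\dots,p_n}$, uniformly in $\lambda$ (Proposition \ref{Prop:1}). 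Without an argument of this type, your proposal establishes the setup but not the extremality of $\P^\lambda$, nor the limits $\lambda(\alpha,\epsilon)\to 0,1$.
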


\begin{Rem}
We note that the interval $[0,1]$ in the statement above can be replaced by any closed and bounded interval. Furthermore Theorem \ref{Theorem::1} can be extended to the case where we only consider $\P\in \mathfrak{P}([0,1])$ with a density bounded from below. We then restrict to $\lambda\in [0,b]$, where $b\in [0,1]$ is chosen such that $\P^{\lambda}$ fulfils this constraint. 

Our method of proof also gives a version of this result when measures are supported on finitely many (equally spaced) points, in which case, $\mathcal{U}([0,1])$ is replaced by a uniform measure over these points.
\end{Rem}

The remainder of this article is structured as follows: Section \ref{sec:finite} is concerned with the proof of Theorem \ref{Theorem::1} for finitely supported measures. This is achieved via the use of a functional central limit theorem, approximating $N^{1/2} \mathcal{W}(\hat{\P}_N,\P)$ in law by a certain integral over a Brownian bridge. As it turns out, we can explicitly find the measure $\P$ maximising the quantile of this integral. We then extend this result to general measures $\P$ on the unit interval $[0,1]$ in Section \ref{sec:proof} using again an approximation argument and a particularly powerful concentration inequality by Talagrand \cite{talagrand1996new}. We also show some numerical results in Section \ref{sec:numerics} and discuss further applications and possible extensions of Theorem \ref{Theorem::1} in Section \ref{sec:applications}.

\section{Brownian bridge approximation and quantile order for measures with finite support}\label{sec:finite}

Even though it is explicitly known, the (rescaled) distribution of $\mathcal{W}(\hat{\P}_N,\P)$ is hard to control for moderate to large $N \in \N$. We thus resort to asymptotic analysis. The following result has a long history in statistical theory and is crucial for our results:
\begin{Lem}[del Bario, Gin\'e , Matr\'an,{\cite[Theorem 2.1]{del1999central}}]\label{Lemma::gine}
For $\P \in \mathfrak{P}([0,1])$ we have
\begin{align*}
 N^{1/2}\mathcal{W}(\hat{\P}_N,\P)\Rightarrow \int_0^1 |B(F_{\P}(t))|dt 
\end{align*}
for $N \to \infty$, where $(B(q))_{0\le q\le 1}$ is a standard Brownian bridge, $F_{\P}$ the distribution function associated with $\P$ and $\Rightarrow$ indicates convergence in law under the measure $\P^\otimes$.
\end{Lem}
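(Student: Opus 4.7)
The plan is to combine the explicit one-dimensional formula for $\mathcal{W}$ with Donsker's classical empirical-process CLT, then pass to the limit via continuous mapping.

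First I would write $\mathcal{W}$ in its one-dimensional form. For probability measures on $[0,1]$, the optimal coupling is given by the monotone rearrangement, so
\begin{equation*}
\mathcal{W}(\P,\tilde{\P}) = \int_0^1 |F_{\P}(t)-F_{\tilde{\P}}(t)|\,dt.
\end{equation*}
Applying this to $\tilde{\P}=\hat{\P}_N$ yields
\begin{equation*}
N^{1/2}\mathcal{W}(\hat{\P}_N,\P) = \int_0^1 \bigl|N^{1/2}(F_{\hat{\P}_N}(t)-F_{\P}(t))\bigr|\,dt,
\end{equation*}
which reduces the problem to studying the empirical process $G_N(t)\coloneqq N^{1/2}(F_{\hat{\P}_N}(t)-F_{\P}(t))$ as an element of the Skorokhod space $D[0,1]$.

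Next I would invoke Donsker's theorem, which gives $G_N \Rightarrow B\circ F_{\P}$ in $D[0,1]$ (equipped with the uniform metric, say, or the Skorokhod $J_1$ topology), where $B$ is a standard Brownian bridge on $[0,1]$. The idea is then to apply the continuous mapping theorem to the functional $\Phi\colon D[0,1]\to\R$ defined by $\Phi(g)\coloneqq\int_0^1 |g(t)|\,dt$, so as to conclude
\begin{equation*}
\Phi(G_N) \Rightarrow \Phi(B\circ F_{\P}) = \int_0^1 |B(F_{\P}(t))|\,dt.
\end{equation*}

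The main technical point is justifying the continuity (or at least $\P$-a.s.\ continuity at the limit) of $\Phi$. Under the uniform topology on $D[0,1]$ this is immediate from $|\Phi(g)-\Phi(h)|\le \|g-h\|_\infty$; under the Skorokhod topology one either uses the Skorokhod representation theorem to obtain almost sure uniform convergence on a common probability space (exploiting the fact that $t\mapsto B(F_\P(t))$ has continuous sample paths, so Skorokhod convergence to it implies uniform convergence) and concludes by dominated convergence, or one directly verifies that $\Phi$ is continuous at every continuous limit point. Either way the result follows. The only subtlety is the potential non-continuity of $F_\P$; but since $B$ has continuous paths, $t\mapsto B(F_\P(t))$ is continuous (as a composition of a continuous function with a non-decreasing càdlàg one, jumps of $F_\P$ produce flat pieces rather than discontinuities in the composition), so the limit sits in the set of continuity points of $\Phi$ in the Skorokhod topology, and the argument goes through.
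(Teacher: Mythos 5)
The paper itself does not prove this lemma --- it is quoted from del Barrio, Gin\'e and Matr\'an --- and your route (the $L^1$ representation $\mathcal{W}(\P,\tilde\P)=\int_0^1|F_\P-F_{\tilde\P}|\,dt$, Donsker's theorem, continuous mapping) is indeed the standard way to obtain it for compactly supported laws. However, your justification of the key continuous-mapping step contains a genuine error: the claim that $t\mapsto B(F_\P(t))$ has continuous sample paths is false whenever $\P$ has atoms. You have the correspondence backwards: a \emph{flat piece} of $F_\P$ (equivalently, a jump of the quantile function $F_\P^{-1}$) produces a flat piece of the composition, whereas a \emph{jump} of $F_\P$ from $a=F_\P(t_0-)$ to $b=F_\P(t_0)$ produces a jump of the composition from $B(a)$ to $B(b)$, which are a.s.\ distinct. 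This is not a peripheral case here: the extremal measures of the paper, e.g.\ $(\delta_0+\delta_1)/2$ and $\P^\lambda$ with $\lambda>0$, are atomic; for $(\delta_0+\delta_1)/2$ the limit path equals $B(1/2)$ on $[0,1)$ and $0$ at $t=1$. Consequently both of your proposed justifications (Skorokhod convergence to a continuous limit implies uniform convergence; continuity of $\Phi$ at continuous limit points) do not cover general $\P\in\mathfrak{P}([0,1])$, and the sup-norm alternative runs into the classical non-measurability of the empirical process with respect to the Borel $\sigma$-field of the uniform topology, which is precisely why one moves to Skorokhod space or to outer expectations.

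The gap is repairable without changing your strategy. Either (i) show that $\Phi(g)=\int_0^1|g(t)|\,dt$ is $J_1$-continuous at \emph{every} c\`adl\`ag limit: if $g_n\to g$ in $J_1$ then $g_n(t)\to g(t)$ at every continuity point of $g$, hence Lebesgue-a.e., and $\sup_n\|g_n\|_\infty<\infty$, so dominated convergence gives $\Phi(g_n)\to\Phi(g)$; or (ii) reduce to the uniform empirical process via the quantile transform $X_i=F_\P^{-1}(U_i)$, which gives $N^{1/2}\mathcal{W}(\hat{\P}_N,\P)=\int_0^1|\alpha_N(F_\P(t))|\,dt$ with $\alpha_N$ the uniform empirical process, and then apply the functional $g\mapsto\int_0^1|g(F_\P(t))|\,dt$, which is Lipschitz for the sup norm, to $\alpha_N\Rightarrow B$ (working in Skorokhod space or in $\ell^\infty$ with outer probabilities). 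With either repair your argument yields the lemma for all $\P\in\mathfrak{P}([0,1])$, including the atomic measures on which the paper's main theorem turns.
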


Instead of finding a solution to \eqref{eq::argmax} for general measures $\P\in \mathfrak{P}([0,1])$, we first restrict to finitely supported measures: To this end, let us fix $n\in \N$ and take equidistant points \[x_1=0 < x_2=\frac{1}{n-1} <x_3=\frac{2}{n-1}<\cdots < x_{n}=1.\]
The set of all measures $\P\in \mathfrak{P}([0,1])$ supported on $\{x_1, \dots, x_n\}$ is then characterised by the probabilities $p_i \coloneqq \P(\{x_i\})$, where $p_1,\dots, p_n\in [0,1]$ with $\sum_{i=1}^{n} p_i=1$. For such a measure $\P=\sum_{i=1}^n p_i \delta_{x_i}$, Lemma \ref{Lemma::gine} reads
\begin{align*}
N^{1/2}\mathcal{W}(\hat{\P}_N,\P)\Rightarrow \left(\sum_{i=1}^{n-1} \left| B(q_i) \right|(x_{i+1}-x_i)\right)=\sum_{i=1}^{n-1} \frac{\left| B(q_i) \right|}{n-1}
\end{align*}
where $q_1=p_1$ and $q_i=p_1+\dots+p_i$. For notational simplicity,  we  write
\[\mathfrak{B}_n\coloneqq\sum_{i=1}^{n-1}  \frac{| B(q_i) |}{n-1} 
\]
in this case. Furthermore we set $\bar{q}_i=1-q_i$ and define the covariance matrix

\begin{align*}
\pmb{\Sigma}_n^{p_1,\dots,p_n} \coloneqq\begin{pmatrix}
q_1\bar{q}_1 & q_1 \bar{q}_2 & q_1 \bar{q}_3 & \dots & q_1 \bar{q}_{n-1}\\
q_1\bar{q}_2 & q_2\bar{q}_2 &  q_2\bar{q}_3 & \dots & q_2\bar{q}_{n-1}\\
\vdots & \vdots & \vdots & \dots & \vdots\\
q_1 \bar{q}_{n-1} & q_2 \bar{q}_{n-1} &q_3\bar{q}_{n-1} & \dots & q_{n-1} \bar{q}_{n-1}
\end{pmatrix}.
\end{align*}

It is well-known that the discretised Brownian bridge $(B(q_i))_{1\le i\le n-1}$ has covariance matrix $\pmb{\Sigma}_n^{p_1,\dots,p_n}$ and, as its mean is zero,  $\pmb{\Sigma}_n^{p_1,\dots,p_n}$ completely determines its distribution. By the peculiar structure of the covariance matrix $\pmb{\Sigma}_n^{p_1,\dots,p_n} $ there is in fact a one-to-one correspondence between $(p_1,\dots, p_n)$ and $\pmb{\Sigma}_n^{p_1,\dots,p_n}$ (up to symmetry). It is thus sufficient to investigate the tractable properties of $\pmb{\Sigma}_n^{p_1,\dots,p_n}$ in order to determine the dependence of $F^{-1}_{\mathfrak{B}_n}(\alpha)$ on $(p_1,\dots, p_n)$. 

Note that, although $\mathcal{W}(\hat{\P}_N,\P)$ is bounded by one, the support of its normalised version $N^{1/2}\mathcal{W}(\hat{\P}_N,\P)$ is unbounded for $N \to \infty$; the same is true for the distribution of $\mathfrak{B}_n$. 

The proof of Theorem \ref{Theorem::1} for finitely supported measures is carried out in three steps: We first establish \eqref{eq:main} in Proposition \ref{Prop::general} and then prove the limiting behaviour of $\lambda(\alpha)$ for $\alpha\downarrow 0$ and $\alpha\uparrow1$ in Propositions \ref{Prop:2} and \ref{Prop:1}.

\begin{Prop}\label{Prop::general}
Let $\alpha\in [0,1]$ and let $\mathcal{U}(\{x_1,\dots, x_n\})$ denote the uniform distribution on the equidistant points $\{x_1, \dots, x_n\}$. Then there exists $\lambda(\alpha)\in[0,1]$ such that 
$$\P^{\lambda(\alpha)}\coloneqq\lambda(\alpha) \frac{\delta_0 +\delta_1}{2}+(1-\lambda(\alpha))\ \mathcal{U}(\{x_1,\dots, x_n\}) $$
 maximises the quantile $F^{-1}_{\mathfrak{B}_n}(\alpha)$.
\end{Prop}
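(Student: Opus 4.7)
The plan is to show existence of a maximiser on the simplex by compactness and continuity, then to reduce its structure to the one-parameter family $\{\P^\lambda\}_{\lambda\in[0,1]}$ by combining the time-reversal symmetry of the Brownian bridge with a pairwise midpoint-exchange argument based on the Markov property.

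For existence, parametrise by $(p_1,\dots,p_n)$ in the compact simplex $\Delta_n=\{p\in[0,1]^n:\sum p_i=1\}$. The covariance $\pmb{\Sigma}_n^{p_1,\dots,p_n}$ depends continuously on $p$, so the Gaussian vector $(B(q_1),\dots,B(q_{n-1}))$ and hence the functional $\mathfrak{B}_n$ vary weakly-continuously in $p$; since the distribution of $\mathfrak{B}_n$ has no atoms (for non-degenerate $p$), the quantile $F^{-1}_{\mathfrak{B}_n}(\alpha)$ is continuous in $p$, and the extreme-value theorem yields a maximiser $p^{\ast}$. For the symmetry reduction, observe that $(B(1-q))_{q\in[0,1]}\stackrel{d}{=}(B(q))_{q\in[0,1]}$ implies $\mathfrak{B}_n$ has the same law under $(p_1,\dots,p_n)$ as under the reversed sequence $(p_n,\dots,p_1)$, so a maximiser may be chosen with $p_1^{\ast}=p_n^{\ast}$.

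The heart of the argument is to show any maximiser has equal interior masses $p_2^{\ast}=\dots=p_{n-1}^{\ast}$, by a pairwise exchange. Fix an interior index $2\le i\le n-2$, freeze all $p_j$ for $j\notin\{i,i+1\}$, and vary $q_i\in[q_{i-1},q_{i+1}]$. The Markov property of the Brownian bridge gives that the joint law of $(B(q_j))_{j\ne i}$ does not depend on $q_i$, while the conditional law $B(q_i)\mid(B(q_{i-1}),B(q_{i+1}))$ is Gaussian with mean
\begin{align*}
\mu(q_i)=B(q_{i-1})+\big(B(q_{i+1})-B(q_{i-1})\big)\frac{q_i-q_{i-1}}{q_{i+1}-q_{i-1}}
\end{align*}
and variance $\sigma^2(q_i)=(q_i-q_{i-1})(q_{i+1}-q_i)/(q_{i+1}-q_{i-1})$, which is maximised at the midpoint $q_i=(q_{i-1}+q_{i+1})/2$. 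The claim to prove is that the quantile of $\mathfrak{B}_n$ is also maximised at the midpoint; iterating over $i$ yields the equal-interior property, which combined with $p_1^{\ast}=p_n^{\ast}$ identifies $p^{\ast}$ as $\P^{\lambda(\alpha)}$ for some $\lambda(\alpha)\in[0,1]$.

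The main obstacle is the midpoint-maximises-quantile claim. A naive $\sigma$-monotonicity argument fails since $|\mu+\sigma Z|$ is not stochastically monotone in $\sigma$ when $\mu\ne 0$, and here $\mu(q_i)$ itself depends linearly on $q_i$. The proof must instead analyse the unconditional CDF
\begin{align*}
\P(\mathfrak{B}_n\le x)=\E\Big[h\big(\mu(q_i);\sigma(q_i),(n-1)x-\textstyle\sum_{j\ne i}|B(q_j)|\big)\Big],
\end{align*}
where $h(\mu;\sigma,\tau)=\P(|\mu+\sigma Z|\le\tau)$ is even in $\mu$, exploiting the symmetry of the joint Gaussian law of $(B(q_{i-1}),B(q_{i+1}))$ to show this expectation is minimised at the midpoint. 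Critically, this averaging argument cannot be applied at the boundary indices $i=1$ or $i=n-1$, because then $B(q_0)=0$ (resp.\ $B(q_n)=0$) is deterministic and the Gaussian averaging structure degenerates; this interior-versus-boundary asymmetry is precisely why $p_1,p_n$ remain free (up to the equality forced by symmetry) and give rise to the one-parameter family $\P^\lambda$.
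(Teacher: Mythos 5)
There is a genuine gap, and it sits exactly at the step you yourself flag as ``the main obstacle.'' Your argument reduces everything to the claim that, with all other masses frozen, the quantile of $\mathfrak{B}_n$ is maximised when $q_i$ is the midpoint of $(q_{i-1},q_{i+1})$ --- but you do not prove this; you only observe that the naive $\sigma$-monotonicity argument fails and assert that an averaging argument ``must'' work, exploiting evenness of $h(\mu;\sigma,\tau)$ in $\mu$ and the symmetry of the law of $(B(q_{i-1}),B(q_{i+1}))$. That is not a proof: moving $q_i$ changes $\mu(q_i)$ and $\sigma(q_i)$ simultaneously, and the random threshold $\tau=(n-1)x-\sum_{j\ne i}|B(q_j)|$ is correlated with the endpoints $(B(q_{i-1}),B(q_{i+1}))$, so no symmetrisation you describe actually delivers the minimisation of the expected conditional probability at the midpoint. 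This is precisely the analytic content the paper supplies: it writes $F_{\mathfrak{B}_n}(t)$ as the explicit Gaussian integral \eqref{eq:bb} over the $\ell_1$-ball $\Xi(t)$, differentiates in the relevant mass, checks the derivative vanishes at the symmetric point and has the right sign on either side, and thereby gets a global minimum. Your proposal contains no analogue of this computation. In addition, even granting the midpoint claim with only weak improvement, ``iterating over $i$'' does not immediately show that a maximiser has equal interior masses; you would need strict improvement off the midpoint, or a coordinate-wise limiting argument together with continuity of the objective.

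Two further steps are missing. First, time-reversal symmetry of the bridge only shows that the set of maximisers is invariant under $(p_1,\dots,p_n)\mapsto(p_n,\dots,p_1)$; it does not produce a maximiser with $p_1=p_n$. For that you need uniqueness, a concavity/averaging argument, or the paper's direct route: for fixed $t$ and fixed interior masses, $p_1\mapsto F_{\mathfrak{B}_n}(t;p_1)$ has a global minimum at $p_1=(1-c)/2$, i.e.\ $p_1=p_n$. Second, even with $p_1=p_n$ and $p_2=\dots=p_{n-1}$ you have only confined the maximiser to a two-parameter symmetric family; to conclude it equals $\P^{\lambda(\alpha)}$ with $\lambda(\alpha)\in[0,1]$ you must also rule out $p_1<p_2$ (which would correspond to a negative $\lambda$). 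The paper proves $p_1\ge p_2$ by a separate derivative and symmetry argument at $p_1=(1-c)/2$; your proposal omits this entirely, and its closing remark about the ``interior-versus-boundary asymmetry'' does not substitute for it.
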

\begin{proof}
We note that maximising the quantile  $F^{-1}_{ \mathfrak{B}_n}(\alpha)$ for a given $\alpha\in [0,1]$ is equivalent to minimising the distribution function $F_{ \mathfrak{B}_n}(t)$ for a given $t\in \R_+$. Given a vector $(p_1,\dots, p_n)$ of probabilities, as described above, the density of $\mathfrak{B}_n$ is fully specified: indeed define the ball
\[\Xi(t)\coloneqq \{(y_1, \dots, y_{n-1})\in  \R^{n-1} \ | \ |y_1|+\dots +|y_{n-1}| \le t(n-1) \}\quad\text{for }t\ge 0,\]
then
\begin{align}\label{eq:bb}
F_{\mathfrak{B}_n}(t)&
=\P\big((B(q_1), \dots, B(q_{n-1})) \in \Xi(t)\big)\nonumber\\
&=\int_{\Xi(t)} \frac{1}{\sqrt{2 \pi p_1}} e^{\frac{-y_1^2}{2p_1}}
 \frac{1}{\sqrt{2 \pi p_2}} e^{\frac{-(y_2-y_1)^2}{2p_2}}\cdots  \nonumber\\
 &
 \qquad\qquad \cdot\frac{1}{\sqrt{2 \pi p_{n-1}}} e^{\frac{-(y_{n-1}-y_{n-2})^2}{2p_{n-1}}}\frac{1}{\sqrt{p_n}}e^{\frac{-y_{n-1}^2}{2p_n}} dy_1 \cdots dy_{n-1}.
\end{align}

For fixed $t\ge 0$, this expression is symmetric in $p_1$ and $p_n$, and in the middle terms for $p_2,\dots, p_{n-1}$ respectively. We now argue that it is optimal to choose $p_1=p_n$ and $p_2=\dots=p_{n-1}$.
Indeed, let us fix $p_2, \dots, p_{n-1}$, $t\ge 0$ and define the constant $c\coloneqq p_2+\dots+p_{n-1} \ge 0$ as well as the function $$C(y_1, \dots, y_{n-1})\coloneqq  \frac{1}{\sqrt{2 \pi p_2}} e^{\frac{-(y_2-y_1)^2}{2p_2}}\dots  \frac{1}{\sqrt{2 \pi p_{n-1}}} e^{\frac{-(y_{n-1}-y_{n-2})^2}{2p_{n-1}}},$$ which is independent of $p_1$. Differentiating \eqref{eq:bb} with respect to $p_1$ yields
\begin{align}\label{eq:exp}
&\frac{\partial}{\partial p_1 }\left(\int_{\Xi(t)} \frac{1}{\sqrt{2\pi p_1(1-p_1-c)}} e^{-\left(\frac{y_1^2}{2p_1}+\frac{y_{n-1}^2}{2(1-p_1-c)} \right)}C(y_1,\dots, y_{n-1})dy_1\dots dy_n\right)\nonumber\\
&= \int_{\Xi(t)} \frac{1}{\sqrt{2\pi}}e^{-\left(\frac{y_1^2}{2p_1}+\frac{y_{n-1}^2}{2(1-p_1-c)} \right)}C(y_1,\dots, y_{n-1}) \cdot\nonumber  \\
&\qquad \Bigg(-\frac{1-2p_1-c}{2(p_1(1-p_1-c))^{3/2}} +\frac{-\frac{y_1^2}{2p_1^2}+\frac{y_{n-1}^2}{2(1-p_1-c)^2}}{\sqrt{p_1(1-p_1-c)}}
 \Bigg)dy_1\dots dy_{n-1}.
\end{align}
Plugging in $p_1=(1-c)/2$ we obtain
\begin{align*}
\int_{\Xi(t)} \frac{1}{\sqrt{2\pi}}e^{-\left(\frac{y_1^2}{2(1-c)}+\frac{y_{n-1}^2}{2(1-c)} \right)}C(y_1,\dots, y_{n-1}) \Bigg( \frac{-\frac{y_1^2}{(1-c)^2/2}+\frac{y_{n-1}^2}{(1-c)^2/2}}{(1-c)/2}
 \Bigg)dy=0
\end{align*}
by symmetry. We now show that $p_1\mapsto (\partial/\partial p_1) F_{\mathfrak{B}_n}(t;p_1)$ has a global minimum in $p_1$. For this we first show that 
\begin{align*}
\frac{\partial}{\partial p_1} F_{\mathfrak{B}_n}(t; p_1)>0
\end{align*}
for all $p_1 \in ((1-c)/2, 1-c]$, which follows from elementary calculus: indeed we first note that the functions $p_1 \mapsto -(1-2p_1-c)$, $p_1 \mapsto -y_1^2/(2p_1^2)$ and $p_1 \mapsto y_{n-1}^2/(2(1-p_1-c)^2)$ are strictly increasing for every $y_1, y_{n-1} \in \R\setminus \{0\}$. 
As $p_1 \mapsto 1/(p_1(1-p_1-c))$ is strictly increasing for $p_1 > (1-c)/2$ and $C(y_1, \dots, y_{n-1}) > 0$ we thus conclude by \eqref{eq:exp} that for some strictly positive function $p_1 \mapsto \tilde{C}(p_1)$ \begin{align*}
&\frac{\partial}{\partial p_1} F_{\mathfrak{B}_n}(t; p_1)\\
\ge&\ \tilde{C}(p_1)\int_{\Xi(t)}  \Bigg(-\frac{1-2p_1-c}{2(p_1(1-p_1-c))^{3/2}} +\frac{-\frac{y_1^2}{2p_1^2}+\frac{y_{n-1}^2}{2(1-p_1-c)^2}}{\sqrt{p_1(1-p_1-c)}}
 \Bigg)dy_1\dots dy_{n-1}>0
\end{align*} 
for all $p_1\in ((1-c)/2,1]$. Next considering $p_1 \in [0, (1-c)/2)$, we see that (up to exchanging the roles of $y_1$ and $y_{n-1}$, which does not affect the sign) the function $p_1 \mapsto (\partial/\partial p_1) F_{\mathfrak{B}_n}(t; p_1)$ is antisymmetric about $((1-c)/2,0)$. We conclude that $p_1\mapsto F_{\mathfrak{B}_n}(t)$ has a global minimum at $p_1=(1-c)/2$.

By a similar argument  we obtain $p_2=\dots=p_{n-1}$. Finally $p_1\ge p_2$ follows as, again by symmetry considerations, the derivative 
\begin{align*}
\frac{\partial}{\partial p_1 }&\left(\int_{\Xi(t)} \frac{1}{\sqrt{2\pi p_1(1-p_1-c)}} e^{-\left(\frac{y_1^2}{2p_1}+\frac{(y_{2}-y_1)^2}{2(1-p_1-c)} \right)}C(y_1,\dots, y_{n-1})dy_1\dots dy_n\right)
\end{align*}
evaluated at $p_1=(1-c)/2$ is negative: indeed, this can be seen by the same calculation as in \eqref{eq:exp} replacing $y_{n-1}$ with $(y_2-y_1)$ and noting the asymmetry of the terms involving $y_1$ and $y_2$ and the fact that the terms $y_1 \mapsto \exp(-y^2_1/(2p_1))$ and $y_2-y_1 \mapsto \exp(-(y_2-y^2_1)^2/(2(1-p_1-c)))$ are radially symmetric and strictly decreasing as a function of the radius $y_1^2$ and $(y_2-y_1)^2$ respectively.
\end{proof}

In order to finish the proof of Theorem \ref{Theorem::1} for finitely supported measures, we now investigate the behaviour of $\lambda(\alpha)$ when $\alpha$ approaches one of the two boundary points $\{0,1\}$. This is trivial whenever $n\le 2$. We start with the case $\alpha\downarrow 0$:

\begin{Prop}\label{Prop:2}
Let $n\ge 3$. We have $\lim_{\alpha\downarrow 0}\lambda(\alpha)=0$ in Proposition \ref{Prop::general}.
\end{Prop}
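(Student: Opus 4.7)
The plan is to prove the equivalent statement that the minimiser $\lambda^\star(t) := \argmin_{\lambda \in [0,1]} F_{\mathfrak{B}_n}(t;\lambda)$ tends to $0$ as $t \downarrow 0$, where $F_{\mathfrak{B}_n}(\,\cdot\,;\lambda)$ denotes the distribution function of $\mathfrak{B}_n$ under $\P^\lambda$. Each such CDF is continuous and strictly increasing for $t > 0$, and $t_\alpha := \max_\lambda F^{-1}_{\mathfrak{B}_n}(\alpha;\lambda)$ tends to $0$ as $\alpha \downarrow 0$; the duality $\lambda(\alpha) \in \argmin_\lambda F_{\mathfrak{B}_n}(t_\alpha;\lambda)$ then yields the stated conclusion.

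The main ingredient is the Gaussian density formula~\eqref{eq:bb}, for which the value at the origin is $f_\lambda(0) = (2\pi)^{-(n-1)/2}\, g(\lambda)^{-1/2}$ with
\[g(\lambda) := p_1(\lambda)\cdots p_n(\lambda) = \Big(\tfrac{\lambda}{2} + \tfrac{1-\lambda}{n}\Big)^2 \Big(\tfrac{1-\lambda}{n}\Big)^{n-2}.\]
First I would verify by direct calculation that $(\log g)'(\lambda) = -\lambda n(n-2)/\big[2(1+(n/2-1)\lambda)(1-\lambda)\big]$, hence $g'(0) = 0$, $g''(0) < 0$, and for $n \geq 3$ the function $g$ is strictly decreasing on $(0,1)$ with unique maximum at $\lambda = 0$. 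Consequently $\partial_\lambda f_\lambda(0)\big|_{\lambda=0} = 0$ while $\partial_\lambda^2 f_\lambda(0)\big|_{\lambda=0} = -\tfrac{1}{2} f_0(0)\, g''(0)/g(0) > 0$.

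The core step is a joint Taylor expansion of $F_{\mathfrak{B}_n}(t;\lambda) = \int_{\Xi(t)} f_\lambda(y)\,dy$ around $(\lambda,t) = (0,0)$. Expanding $f_\lambda(y) = f_\lambda(0)\exp(-\tfrac{1}{2} y^\top \Sigma_\lambda^{-1} y)$ and using that odd monomials in $y$ integrate to zero on the symmetric $\ell^1$-ball $\Xi(t)$ yields
\[F_{\mathfrak{B}_n}(t;\lambda) = f_\lambda(0)\,V(t)\Big(1 - \tfrac{\kappa_n}{2}\,t^2\,\mathrm{tr}(\Sigma_\lambda^{-1}) + O(t^4)\Big),\]
where $V(t) = (2(n-1))^{n-1}t^{n-1}/(n-1)!$ and $\kappa_n > 0$ depends only on $n$. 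The tridiagonal structure of $\Sigma_\lambda^{-1}$ gives $\mathrm{tr}(\Sigma_\lambda^{-1}) = 2/p_1(\lambda) + 2(n-2)/p_2(\lambda)$, so $\partial_\lambda \mathrm{tr}(\Sigma_\lambda^{-1})\big|_{\lambda=0} = n(n-2) > 0$. Combined with the previous step this produces
\[\partial_\lambda F_{\mathfrak{B}_n}(t;\lambda)\big|_{\lambda=0} \sim -c_1\, t^{n+1}, \qquad \partial_\lambda^2 F_{\mathfrak{B}_n}(t;\lambda)\big|_{\lambda=0} \sim c_2\, t^{n-1}\]
with $c_1, c_2 > 0$. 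The quadratic approximation in $\lambda$ around $0$ is therefore minimised at $\lambda \sim (c_1/c_2)\,t^2 \to 0$, showing that the local minimiser near $0$ vanishes as $t\downarrow 0$.

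To upgrade to the global minimiser I would argue by contradiction: if $\lambda^\star(t_k) \geq \delta > 0$ along a sequence $t_k \downarrow 0$, pass to a subsequence with $\lambda^\star(t_k) \to \lambda_\infty \in [\delta, 1]$. When $\lambda_\infty \in [\delta, 1)$ the uniform asymptotic $F_{\mathfrak{B}_n}(t;\lambda)/F_{\mathfrak{B}_n}(t;0) \to \sqrt{g(0)/g(\lambda_\infty)} > 1$ (valid uniformly on compacts of $[0,1)$) immediately contradicts minimality. The main obstacle is the singular case $\lambda_\infty = 1$; to handle it I would decompose $\mathfrak{B}_n^{(\lambda)} = (n-1)^{-1}\sum_i |u + v_i|$ with $u = B(q_1^\lambda)$ (variance close to $1/4$) and $v_i = B(q_i^\lambda) - B(q_1^\lambda)$ (of variance $O(1-\lambda)$ and correlation $O(\sqrt{1-\lambda})$ with $u$). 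In the regime $1 - \lambda_k \geq c t_k^2$ the Taylor expansion remains uniformly valid and yields $F_{\mathfrak{B}_n}(t_k;\lambda_k) \geq (1-o(1))\sqrt{g(0)/g(\lambda_k)}\,F_{\mathfrak{B}_n}(t_k;0) > F_{\mathfrak{B}_n}(t_k;0)$. In the regime $1 - \lambda_k < c t_k^2$ (with $c$ sufficiently small), Markov's inequality on $\bar v := (n-1)^{-1}\sum_i |v_i|$ (whose mean is $O(\sqrt{1-\lambda_k}) = O(t_k)$), combined with the Lipschitz bound on $F^1(t) := 2\Phi(2t)-1$, yields $F_{\mathfrak{B}_n}(t_k;\lambda_k) \geq c' t_k$, which dominates $F_{\mathfrak{B}_n}(t_k;0) = O(t_k^{n-1})$ when $n \geq 3$. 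Either way, a contradiction is reached.
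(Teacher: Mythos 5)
Your overall strategy works and is genuinely different from the paper's. The paper also reduces to the one-parameter family $\P^\lambda$ and works with the Gaussian integral \eqref{eq:bb}, but it differentiates in $p_1$ and argues that for small $t$ the positive constant term $(np_1-1)/((1-2p_1)p_1)$ dominates the $O(t^2)$ terms on $\Xi(t)$, so the derivative is positive on $(1/n,1/2)$ and the minimiser is pushed to $p_1=1/n$ (with the smallness threshold $t(p_1)$ allowed to depend on $p_1$). You instead use the zeroth-order small-ball asymptotic $F_{\mathfrak{B}_n}(t;\lambda)\approx f_\lambda(0)\,\mathrm{vol}(\Xi(t))$ together with the fact that $\det\pmb{\Sigma}_\lambda\propto p_1\cdots p_n$ is uniquely maximised at $\lambda=0$, plus a compactness/contradiction argument in the spirit of the paper's proof of Proposition \ref{Prop:1}. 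Your computations check out ($(\log g)'$, $\mathrm{tr}(\Sigma_\lambda^{-1})=2/p_1+2(n-2)/p_2$, $\partial_\lambda\mathrm{tr}(\Sigma_\lambda^{-1})|_{\lambda=0}=n(n-2)$), the passage from $\alpha\downarrow0$ to $t\downarrow0$ via the observation that $\lambda(\alpha)$ minimises $F_{\mathfrak{B}_n}(t_\alpha;\cdot)$ is legitimate (the CDFs are continuous and strictly increasing, and $\inf_\lambda F_{\mathfrak{B}_n}(t;\lambda)>0$ for fixed $t>0$), and your explicit treatment of the corner $\lambda\to1$ together with the rate $\lambda^\star(t)\asymp t^2$ is finer than what the paper records.

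Two steps are looser than claimed. First, in the regime $1-\lambda_k\ge ct_k^2$ the quantity $t_k^2\,\mathrm{tr}(\Sigma_{\lambda_k}^{-1})\asymp t_k^2/(1-\lambda_k)$ is only bounded, not small, so the second-order expansion is not uniformly valid there and the factor $(1-o(1))$ is not available; what is true (and suffices) is a constant-factor bound $F_{\mathfrak{B}_n}(t_k;\lambda_k)\ge C(c,n)\,f_{\lambda_k}(0)V(t_k)$, since on $\Xi(t_k)$ every increment satisfies $(y_i-y_{i-1})^2/(2p_i)\le 2n(n-1)^2/c$; because $\sqrt{g(0)/g(\lambda_k)}\to\infty$ as $\lambda_k\to1$, a constant factor still produces the contradiction. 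Second, and this is the real gap: in the regime $1-\lambda_k<ct_k^2$, Markov's inequality plus the Lipschitz bound on the law of $u=B(q_1^{\lambda})$ does not by itself give $F_{\mathfrak{B}_n}(t_k;\lambda_k)\ge c't_k$. The natural estimate $\P(|u|\le t/2,\ \bar v\le t/2)\ge\P(|u|\le t/2)-\P(\bar v>t/2)\ge c_1t-2\sqrt{c}$ is vacuous for small $t$, and you cannot simply multiply the two probabilities because $u$ and $\bar v$ are dependent (as you yourself note, with correlation of order $\sqrt{1-\lambda}$). You need one extra ingredient: either the Gaussian correlation inequality applied to the symmetric convex sets $\{|u|\le t/2\}$ and $\{\bar v\le t/2\}$, or, more elementarily, integrate the density \eqref{eq:bb} over the product region $\{|y_1|\le t/2,\ |y_i-y_{i-1}|\le t/(2(n-2)),\ i=2,\dots,n-1\}\subseteq\Xi(t)$, which factorises: the increment factors are $\Theta(1)$ because $p_i=(1-\lambda)/n\le ct^2/n$, while the $y_1$-factor contributes $\Theta(t)$. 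With either fix, $c't_k$ dominates $F_{\mathfrak{B}_n}(t_k;0)=\Theta(t_k^{n-1})$ for $n\ge3$, your contradiction goes through, and the proof is complete.
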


\begin{proof}
By Proposition \ref{Prop::general} it is sufficient to restrict to measures 
\begin{equation}\label{eq:plamdefn}
\P^{\lambda}=\lambda\frac{\delta_0+\delta_1}{2}+(1-\lambda)~\mathcal{U}(\{x_1,\dots, x_n\})
\end{equation}
for $\lambda\in[0,1]$. Note that any such distribution can be identified with a probability vector $$p=\left(p_1,\frac{1-2p_1}{n-2},\dots, \frac{1-2p_1}{n-2},p_1\right).$$ We aim to find $p_1$ which minimises
$\P((B(q_1), \dots, B(q_{n-1})) \in \Xi(t))$ for small $t>0$. Substituting and taking the derivative in $p_1$ in \eqref{eq:bb} we have
\begin{align}\label{eq:conclusion}
&\frac{\partial}{\partial p_1}\Bigg(\int_{\Xi(t)}	\frac{p_1^{-1}}{(2\pi)^{(n-1)/2}} 
\left(\frac{1-2p_1}{n-2}\right)^{-(n-2)/2}\nonumber\\
&\qquad\cdot\exp\bigg(-\frac{1}{2}\bigg(\frac{1}{p_1}(y_1^2+y_{n-1}^2)+\frac{n-2}{1-2p_1}\sum_{i=2}^{n-1}(y_i-y_{i-1})^2  \bigg) \bigg)dx\Bigg)\nonumber\\
&=\frac{1}{(2\pi)^{(n-1)/2}}\frac{1}{p_1\left(\frac{1-2p_1}{n-2} \right)^{(n-2)/2}}\cdot  \nonumber\\
&\qquad \int_{\Xi(t)} \bigg( \frac{np_1-1}{(1-2p_1)p_1} +\frac{y_1^2+y_{n-1}^2}{2p_1^2}-\frac{n-2}{(1-2p_1)^2}\sum_{i=2}^{n-1}(y_i-y_{i-1})^2\bigg)\cdot \nonumber\\
& \qquad\qquad\qquad   \tilde{C}(y_1,\dots,y_{n-1})dy_1\dots dy_{n-1},
\end{align}
where 
\[\begin{split}
   0&\leq \tilde{C}(y_1,\dots, y_n)\\
   &\coloneqq \exp\bigg(-\frac{1}{2}\left( \frac{1}{p_1}(y_1^2+y_{n-1}^2)+\frac{n-2}{1-2p_1}\left((y_2-y_1)^2+\dots+(y_{n-1}-y_{n-2})^2\right)\right)\bigg)
  \end{split}\]
and $p_1\in (1/n,1/2)$. Carefully examining \eqref{eq:conclusion} we conclude that for $t=t(p_1)>0$ small enough the derivative $(\partial/\partial{p_1})F_{\mathfrak{B}_n}(t)$ is positive. The claim follows.
\end{proof}

Next we address the case $\lim_{\alpha\uparrow 1}\lambda(\alpha)$. 

\begin{Lem}\label{Lemma::2}
The entries $(q_i\bar{q}_j)_{1\le i\le j \le n}$ of the covariance matrix $\pmb{\Sigma}_n^{p_1,\dots,p_n}$ are maximised componentwise by the choice $p_1=p_n=1/2$ and $p_i=0$ for $i=2,\dots, n-1$. For this choice $q_i\bar{q}_j=1/4$ for all $1\le i \le j\le n$.
\end{Lem}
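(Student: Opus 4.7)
The plan is to exploit the structural constraint that $q_i$ and $\bar{q}_j$ are "complementary" whenever $i\le j$, reducing the maximisation to a trivial AM--GM argument that can be satisfied \emph{simultaneously} by a single probability vector.

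First I would observe the key identity: for $1\le i\le j\le n-1$ we have
\begin{align*}
q_i + \bar{q}_j = (p_1+\cdots+p_i) + (p_{j+1}+\cdots+p_n) = 1 - \sum_{k=i+1}^{j} p_k \le 1,
\end{align*}
where the final inequality uses $p_k\ge 0$ (for $i=j$ the sum is empty and the bound is the trivial equality $q_i+\bar{q}_i=1$). Combined with the AM--GM inequality, this gives the componentwise upper bound
\begin{align*}
q_i\bar{q}_j \le \left(\frac{q_i+\bar{q}_j}{2}\right)^2 \le \frac{1}{4}.
\end{align*}
Hence $1/4$ is an \emph{a priori} upper bound for every entry of $\pmb{\Sigma}_n^{p_1,\dots,p_n}$, independent of the choice of probability vector.

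Next I would verify that the choice $p_1=p_n=1/2$, $p_i=0$ for $2\le i\le n-1$, achieves this bound in every entry: indeed this gives $q_i=1/2$ and $\bar{q}_i=1/2$ for all $1\le i\le n-1$, so $q_i\bar{q}_j=1/4$ identically. Combined with the bound above, this establishes that the entries are maximised componentwise, which is the claim.

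There is essentially no obstacle here; the only conceptual point is to notice that componentwise maximisation is achievable at all (as AM--GM inequalities for different $(i,j)$ may in general have incompatible equality conditions), and this is guaranteed by the fact that the equality condition $q_i=\bar{q}_j=1/2$ with $p_{i+1}=\cdots=p_j=0$ can be met uniformly in $(i,j)$ precisely because all the mass is concentrated at the two endpoints $\{0,1\}$.
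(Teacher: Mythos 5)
Your proof is correct and essentially matches the paper's argument, which simply notes $q_i\bar{q}_j\le q_i\bar{q}_i\le 1/4$ for $j\ge i$ and that the endpoint measure attains $1/4$ in every entry. Your AM--GM route via $q_i+\bar{q}_j\le 1$ is a trivially different packaging of the same one-line componentwise bound, and your explicit remark that the equality conditions are simultaneously attainable is a nice (if equally immediate) clarification.
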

\begin{proof}
Evidently $q_i \bar{q}_j \le q_i \bar{q}_i\le 1/4$ for $j \ge i$ and the claim follows.
\end{proof}

Before stating the next lemma let us recall the standard notation $f \sim g$ iff $\lim_{t\to \infty} f(t)/g(t)=1$ and $f \lesssim g$ iff $\limsup_{t\to \infty} f(t)/g(t)\le1$ for functions $f,g:\R\to \R$.

\begin{Lem}\label{Lemma:4}
Consider $\pmb{X}\sim \mathcal{N}(\pmb{\mu}, \pmb{\Sigma})$, where $\pmb{\Sigma} \in \R^{d\times d}$. Let $\{\alpha_j \ | \ j=1,\dots, d\}$ be the eigenvalues of $\pmb{\Sigma}$ such that 
\begin{align*}
\alpha_1\ge \alpha_2 \ge \dots \ge \alpha_d.
\end{align*}
Let $m$ be the multiplicity of $\alpha_1$ and denote by $\Gamma(s)=\int _0^{\infty} t^{s-1}e^{-t}dt$ the Gamma function. Then, as $t\to \infty$,
\begin{align}\label{eq:convergence}
\P\left( \|\pmb{X}-\pmb{\mu}\|_{\ell_2}\ge \sqrt{t\alpha_{1}}\right)\sim
\frac{2^{1-\frac{m}{2}}}{\Gamma\left(\frac{m}{2}\right)}e^{-t/2}t^{m/2-1}\prod_{j=m+1}^d \left(1-\frac{\alpha_j}{\alpha_{1}}\right)^{-1/2}.
\end{align}
Furthermore, for fixed $\epsilon>0$, \eqref{eq:convergence} holds uniformly in $\{(\alpha_1, \dots, \alpha_d) \ | \ 1/\epsilon\ge \alpha_1\ge \alpha_1-\epsilon\ge \alpha_{m+1} \ge \cdots \ge \alpha_d\}$.
\end{Lem}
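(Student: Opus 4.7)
The approach is to diagonalise the covariance and reduce to a convolution-tail estimate for the sum of a chi-square variable and an independent perturbation. By translation we may take $\pmb{\mu}=0$, and by rotation invariance of $\|\cdot\|_{\ell_2}$ we may assume $\pmb{\Sigma}=\mathrm{diag}(\alpha_1,\ldots,\alpha_d)$, so that $\|\pmb{X}\|_{\ell_2}^2=\sum_{j=1}^d\alpha_j Z_j^2$ for independent standard Gaussians $Z_1,\ldots,Z_d$. Dividing by $\alpha_1$ gives $\|\pmb{X}\|_{\ell_2}^2/\alpha_1=Y+W$, where $Y:=\sum_{j=1}^m Z_j^2\sim\chi^2_m$ is independent of $W:=\sum_{j=m+1}^d(\alpha_j/\alpha_1)Z_j^2$, whose moment generating function $M_W(s)=\prod_{j=m+1}^d(1-2s\alpha_j/\alpha_1)^{-1/2}$ is finite for $s<\alpha_1/(2\alpha_{m+1})$. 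Since $\alpha_{m+1}<\alpha_1$, the value $M_W(1/2)$ is finite and equals precisely the product on the right-hand side of \eqref{eq:convergence}; writing
\[
g(t):=\frac{2^{1-m/2}}{\Gamma(m/2)}\,t^{m/2-1}e^{-t/2},
\]
the claim reduces to $\P(Y+W\ge t)\sim M_W(1/2)\,g(t)$ as $t\to\infty$.

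The classical chi-square tail asymptotic $\bar{F}_Y(y)\sim g(y)$ (obtained by one integration by parts of the explicit $\chi^2_m$ density), combined with the representation $\P(Y+W\ge t)=\E[\bar{F}_Y(t-W)]$ (using the convention $\bar{F}_Y\equiv 1$ on $(-\infty,0]$), reduces everything to interchanging limit and expectation: pointwise $\bar{F}_Y(t-W)/g(t)\to e^{W/2}$ as $t\to\infty$, and $\E[e^{W/2}]=M_W(1/2)$ is precisely the desired constant. To justify the interchange I would fix $\delta\in(\alpha_{m+1}/\alpha_1,1)$ and split at the threshold $W=\delta t$. On $\{W\le\delta t\}$ one has $t-W\ge(1-\delta)t$, so the one-sided bound $\bar{F}_Y(y)\le K g(y)$ for $y\ge y_0$ combined with the trivial estimate for $((t-W)/t)^{m/2-1}$ gives $\bar{F}_Y(t-W)/g(t)\le C_\delta\, e^{W/2}$ uniformly for large $t$, and dominated convergence then delivers the main term $M_W(1/2)$. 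On $\{W>\delta t\}$ I would bound $\bar{F}_Y\le 1$ and use the Chernoff inequality $\P(W>\delta t)\le e^{-s\delta t}M_W(s)$ with $s$ slightly below $\alpha_1/(2\alpha_{m+1})$: the choice $\delta>\alpha_{m+1}/\alpha_1$ makes the exponential rate strictly larger than $1/2$, so this contribution is $o(g(t))$.

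The hardest point is the uniformity on the compact set $\Lambda_\epsilon:=\{(\alpha_1,\ldots,\alpha_d):1/\epsilon\ge\alpha_1\ge\alpha_{m+1}+\epsilon\ge\cdots\ge\alpha_d\}$. On $\Lambda_\epsilon$ the ratio $\alpha_{m+1}/\alpha_1$ stays bounded away from $1$ (by $1-\epsilon^2$), so $\delta$, the Chernoff exponent, the constant $C_\delta$ and $M_W(1/2)$ can all be chosen continuously in the parameters, and the chi-square tail constant depends only on $m$. A small technical nuisance arises for $m=1$, where $y^{m/2-1}=y^{-1/2}$ is singular at $y=0$, but on the good event $t-W\ge(1-\delta)t$ is uniformly bounded below, so the same constant $C_\delta$ still works. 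Assembling these estimates yields both the pointwise and the uniform asymptotic in \eqref{eq:convergence}.
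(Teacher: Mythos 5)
Your argument is sound and is, in substance, the manipulation that the paper outsources to the literature: the paper's proof consists of citing H\"usler and Piterbarg, remarking that $\|\pmb{X}-\pmb{\mu}\|_{\ell_2}^2$ has the law of $\sum_j \alpha_j U_j^2$ for i.i.d.\ normals and that chi-square tail approximations ``can be manipulated'', and asserting uniformity by inspection of those proofs. You carry the manipulation out explicitly: the reduction to $\|\pmb{X}-\pmb{\mu}\|_{\ell_2}^2/\alpha_1=Y+W$ with $Y\sim\chi^2_m$ independent of $W$, the identity $\P(Y+W\ge t)=\E[\bar F_Y(t-W)]$, the split at $W=\delta t$ with $\alpha_{m+1}/\alpha_1<\delta<1$, and the Chernoff bound on $\{W>\delta t\}$ are all correct; $M_W(1/2)=\prod_{j>m}(1-\alpha_j/\alpha_1)^{-1/2}$ is exactly the product in \eqref{eq:convergence}, and $s\delta>1/2$ is attainable precisely because $\delta>\alpha_{m+1}/\alpha_1$. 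So your proposal upgrades the paper's citation to a self-contained proof of the first claim, and it handles the $m=1$ singularity correctly.

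The one step that needs tightening is the uniformity. As written you obtain the main term by dominated convergence and then appeal to ``continuity of the constants'' on the compact set; but dominated convergence gives the limit for each fixed eigenvalue vector, and continuity of $\delta$, $C_\delta$, $M_W(1/2)$ does not by itself convert a family of pointwise limits into a uniform one. The repair is quantitative and uses only your own ingredients: write $\bar F_Y(y)=g(y)\big(1+r(y)\big)$ with $r(y)\to 0$ depending only on $m$, so that on $\{W\le\delta t\}$ one has $|r(t-W)|\le\sup_{y\ge(1-\delta)t}|r(y)|$, independent of the eigenvalues; note $g(t-W)/g(t)=e^{W/2}\big(1+O(W/t)\big)$ on that event with a constant depending only on $m$ and $\delta$; and control $\E\big[We^{W/2}\big]$ and $\E\big[e^{W/2}\mathds{1}_{\{W>\delta t\}}\big]\le M_W(s_0)\,e^{-(s_0-1/2)\delta t}$ uniformly. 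This works on your set $\Lambda_\epsilon$ because, as you observe, $\alpha_{m+1}/\alpha_1\le 1-\epsilon^2$ there, so $\delta$ and $s_0$ with $s_0\delta>1/2$ and a uniform bound on $M_W(s_0)$ can be fixed in terms of $\epsilon$ alone, while $M_W(1/2)\ge 1$ makes division by the right-hand side harmless. With that rewriting the uniform statement follows; apart from this, the proposal is complete.
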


\begin{proof}
The first claim is stated in \cite[Theorem 1]{husler2002formula} and \cite[Theorem 3.6 \& Remark 3.8]{piterbarg1995laplace}. In particular, \cite[Proof of Theorem 1]{husler2002formula} proceeds as follows: Noting that $$\mathcal{L}\left(\|X-\mu\|_{\ell_2}\right)=\mathcal{L}\left(\sum_{i=1}^d \alpha_i U_i^2\right),$$ where $U_i$ are i.i.d. normal random variables, approximation results for the tails of $\chi^2$-distributions can be manipulated to show the claim. A careful examination of the estimates in the proof show that these approximations are indeed uniform in $\{(\alpha_1, \dots, \alpha_d) \ | \ 1/\epsilon\ge \alpha_1\ge \alpha_1-\epsilon\ge \alpha_{m+1} \ge \cdots \ge \alpha_d\}$.
\end{proof}

\begin{Prop}\label{Prop:1}
Let $n\ge 3$. We have $\lim_{\alpha\uparrow 1}\lambda(\alpha)=1$ in Proposition \ref{Prop::general}.
\end{Prop}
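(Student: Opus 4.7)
The plan is to reduce Proposition~\ref{Prop:1} to a comparison of upper tails of $\mathfrak{B}_n$ across the family $\{\P^\lambda\}_{\lambda\in[0,1]}$ given by \eqref{eq:plamdefn}, which is sufficient by Proposition~\ref{Prop::general}. Concretely, the statement $\lim_{\alpha\uparrow 1}\lambda(\alpha)=1$ is equivalent to: for every $\delta>0$ there exists $\alpha_0<1$ such that for $\alpha\in[\alpha_0,1]$,
\[
F^{-1}_{\mathfrak{B}_n}(\alpha;\P^1)\,>\,F^{-1}_{\mathfrak{B}_n}(\alpha;\P^{\lambda})\quad\text{for every }\lambda\in[0,1-\delta].
\]
I would establish this by showing that the upper tail of $\mathfrak{B}_n$ under $\P^{\lambda}$ decays exponentially faster than under $\P^1$, uniformly for $\lambda\in[0,1-\delta]$.

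The tail under $\P^1$ is explicit: with $p_1=p_n=1/2$ all partial sums satisfy $q_i=1/2$, so $B(q_i)=B(1/2)$ for every $i$ and $\mathfrak{B}_n=|B(1/2)|$ is half-normal with variance $1/4$; hence
\[
\P(\mathfrak{B}_n>t;\P^1)=2\bigl(1-\Phi(2t)\bigr)\sim\frac{e^{-2t^2}}{t\sqrt{2\pi}}\quad\text{as }t\to\infty.
\]
For $\lambda<1$, Cauchy--Schwarz gives $\mathfrak{B}_n\le\|(B(q_1),\dots,B(q_{n-1}))\|_{\ell_2}/\sqrt{n-1}$, so that
\[
\P(\mathfrak{B}_n>t;\P^{\lambda})\,\le\,\P\bigl(\|(B(q_1),\dots,B(q_{n-1}))\|_{\ell_2}>t\sqrt{n-1};\P^{\lambda}\bigr).
\]
Applying Lemma~\ref{Lemma:4} to the mean-zero Gaussian vector $(B(q_1),\dots,B(q_{n-1}))$ with covariance $\pmb{\Sigma}_n^{\lambda}$ and top eigenvalue $\alpha_1(\lambda)$ shows that this right-hand side behaves asymptotically like $t^{m-2}e^{-(n-1)t^2/(2\alpha_1(\lambda))}$ up to multiplicative constants, so its exponential rate strictly exceeds $2$ exactly when $\alpha_1(\lambda)<(n-1)/4$.

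To secure the bound $\alpha_1(\lambda)<(n-1)/4$ for $\lambda<1$ I combine Lemma~\ref{Lemma::2} with the Frobenius inequality $\alpha_1(\lambda)^2\le\|\pmb{\Sigma}_n^{\lambda}\|_F^2=\sum_{i,j}(\pmb{\Sigma}_n^{\lambda})_{ij}^2$: by Lemma~\ref{Lemma::2} every entry of $\pmb{\Sigma}_n^{\lambda}$ is at most $1/4$, and for $\lambda<1$ (with $n\ge 3$) at least one entry, e.g.\ $q_1\bar q_{n-1}=q_1^2$, is strictly smaller than $1/4$, so $\|\pmb{\Sigma}_n^{\lambda}\|_F<(n-1)/4$. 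By continuity of $\lambda\mapsto\alpha_1(\lambda)$ and compactness of $[0,1-\delta]$, one obtains a uniform gap $\sup_{\lambda\in[0,1-\delta]}\alpha_1(\lambda)\le(n-1)/4-c(\delta)$, hence a uniform exponential rate $\ge 2+c'(\delta)$ in the bound above. Combining with the uniformity in Lemma~\ref{Lemma:4} and the explicit tail under $\P^1$, there exists $t_1$ such that $\P(\mathfrak{B}_n>t;\P^1)>\sup_{\lambda\in[0,1-\delta]}\P(\mathfrak{B}_n>t;\P^{\lambda})$ for all $t\ge t_1$, which converts into the required quantile comparison for $\alpha$ close to $1$. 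Since $\delta>0$ is arbitrary, $\lambda(\alpha)\to 1$. The main technical obstacle is precisely this uniformity: one must guarantee that the gap between the exponential rate under $\P^{\lambda}$ and $2$ does not collapse as $\lambda$ ranges over $[0,1-\delta]$, which the continuity/compactness argument together with the Frobenius-type comparison handles without requiring any delicate spectral analysis of $\pmb{\Sigma}_n^{\lambda}$.
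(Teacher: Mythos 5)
Your proposal is correct and follows essentially the same route as the paper's proof: it reduces the claim to a tail comparison, bounds $\mathfrak{B}_n$ via $\|x\|_{\ell_1}\le\sqrt{n-1}\,\|x\|_{\ell_2}$, applies Lemma \ref{Lemma:4} to obtain the exponential rate $(n-1)/(2\alpha_1(\lambda))$, compares with the explicit half-normal tail $2(1-\Phi(2t))$ of $\mathfrak{B}_n$ under $\P^1$, and uses continuity of the eigenvalues together with compactness of $[0,1-\delta]$ to get a uniform spectral gap. The only (minor) deviation is that you derive the strict bound $\alpha_1(\lambda)<(n-1)/4$ for $\lambda<1$ from the Frobenius-norm inequality combined with Lemma \ref{Lemma::2}, whereas the paper uses the Perron--Frobenius row-sum bound; both are valid.
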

\begin{proof}
As in Lemma \ref{Lemma:4} we denote the eigenvalues of 
$$ \pmb{\Sigma}^{\left(\frac{1-\lambda}{n}+\frac{\lambda}{2},\frac{1-\lambda}{n}, \dots, \frac{1-\lambda}{n},\frac{1-\lambda}{n}+\frac{\lambda}{2} \right)}_n$$
by $\{\alpha_i(\lambda) \ | \ i=1,\dots, n-1\}$, where $$\alpha_1(\lambda)\ge \alpha_2(\lambda)\ge \dots\ge \alpha_{n-1}(\lambda).$$
Lemma \ref{Lemma:4} implies that for $t\to \infty$
\begin{align*}
\P^{\lambda}\left( \Big(\sum_{i=1}^{n-1} | B(q_i) |^2\Big)^{1/2}\ge t\right)\sim
\frac{\sqrt{2}}{\Gamma\left(\frac{1}{2}\right)}e^{-\frac{t^2}{2\alpha_1(\lambda)}}\frac{\sqrt{\alpha_1(\lambda)}}{t}\prod_{j=2}^d \left(1-\frac{\alpha_j(\lambda)}{\alpha_{1}(\lambda)}\right)^{-1/2},
\end{align*}
where $\P^\lambda$ are the measures defined in Proposition \ref{Prop::general},
and
\begin{align*}
\P^1(|B(1/2)|\ge t) \sim \frac{\sqrt{2}}{\Gamma\left(\frac{1}{2}\right)}\ \frac{e^{-2t^{2}}}{2t}.
\end{align*}
By the Perron--Frobenius theorem, $\alpha_1(\lambda)$ has multiplicity one and is less than the maximum over the sum of its rows, in particular by Lemma \ref{Lemma::2} it is strictly less than $(n-1)/4$ for $\lambda<1$. Using this fact, together with continuity of the map $$\lambda \mapsto \pmb{\Sigma}^{\left(\frac{1-\lambda}{n}+\frac{\lambda}{2},\frac{1-\lambda}{n}, \dots, \frac{1-\lambda}{n},\frac{1-\lambda}{n}+\frac{\lambda}{2} \right)}_n$$ and thus also continuity of the eigenvalues $\{\alpha_i(\lambda)\ | \ i=1,\dots, n-1\}$ as a function of $\lambda$, and compactness of $[0,1]$, we conclude that the term $$C(\lambda)\coloneqq \frac{\sqrt{2}}{\Gamma\left(\frac{1}{2}\right)} \prod_{j=2}^d \left(1-\frac{\alpha_j(\lambda)}{\alpha_{1}(\lambda)}\right)^{-1/2}$$ is uniformly bounded in $\lambda\in [0,1]$.

Let us write $F_{\mathfrak{B}_n}(t;\lambda)$ for the corresponding distribution functions of $\mathfrak{B}_n=\sum_i |B(q_i)|/(n-1)$, where $q$ is chosen as in Lemma \ref{Lemma::gine}, and $B$ is a Brownian bridge under $\P^\lambda$. By H\"older's inequality we have $\|x\|_{\ell_1}\le \sqrt{n-1} \|x\|_{\ell_2}$ for $x\in \R^{n-1}$. 
We then compute 
\begin{align}\label{eq:as}
\big(1-F_{\mathfrak{B}_n}(t;\lambda) \big)
&\sim \P^\lambda\left(\sum_{i=1}^{n-1} | B(q_i) |\ge t(n-1) \right)\nonumber\\
&\lesssim \P^\lambda\left(\left(\sum_{i=1}^{n-1} | B(q_i) |^2\right)^{1/2}\ge t \sqrt{n-1} \right)\nonumber\\
&\sim  C(\lambda)\ \exp\left(-\frac{t^2(n-1)}{2\alpha_1(\lambda)}\right)\frac{\sqrt{\alpha_1(\lambda)}}{t\sqrt{n-1}}.
\end{align}
As $\mathfrak{B}_n$ is supported on $\R_+$, writing 
$$\bar{\lambda}(t) \coloneqq \text{arg\ inf}_{\lambda\in [0,1]}\big\{F_{\mathfrak{B}_n}(t;\lambda)\big\}$$
for the minimiser at a point $t \in [0,\infty)$, in order to show that 
\[\lim_{t\to \infty}\bar{\lambda}(t) = \lim_{\alpha \to 1}\lambda(\alpha)=1,\]
it is sufficient to show that every sequence $(t_k)_{k\in\N}$ converging to infinity has a subsequence $(t_{k_l})_{l\in\N}$  such that $\lim_{l\to \infty}\bar{\lambda}(t_{k_l})=1$. But if $\lim_{l\to \infty}\bar{\lambda}(t_{k_{l}})<1$ then there exists $\epsilon>0$ and $l_0\in \N$ such that $\bar{\lambda}(t_{k_l})\le 1-\epsilon$ for all $l\ge l_0$. By compactness of the interval $[0,1-\epsilon]$ there furthermore exists $\delta>0$ such that 
$$\frac{\alpha_1(\bar{\lambda})}{(n-1)/4}\le 1-\delta$$ for all $\bar{\lambda}\in [0,1-\epsilon]$. By \eqref{eq:as} above and Lemma \ref{Lemma:4}, we now conclude that, for $l\to \infty$,
\begin{align*}
\left(1-F_{\mathfrak{B}_n}(t_{k_l}; \bar{\lambda}({t_{k_l}}))\right)
&\lesssim \sup_{0\le \bar{\lambda}\le 1-\epsilon} \left(1-F_{\mathfrak{B}_n}(t_{k_l}; \bar{\lambda})\right)\\
&\sim \sup_{0\le \bar{\lambda}\le 1-\epsilon} C(\bar{\lambda})\ \exp\left(-\frac{t_{k_l}^2(n-1)}{2\alpha_1(\bar{\lambda})}\right)\frac{\sqrt{\alpha_1(\bar{\lambda})}}{t_{k_l}\sqrt{n-1}}\\
&\sim \sup_{0\le \bar{\lambda}\le 1-\epsilon} \frac{C(\bar{\lambda})}{C(1)} \left( C(1)~ \frac{\exp\left(-2t_{k_l}^2\right)}{2t_{k_l}}\right)
\frac{\sqrt{\alpha_1(\bar{\lambda})}}{\sqrt{n-1}}\\
&\qquad\qquad\cdot \exp\left(-t_{k_l}^2\left(\frac{n-1}{2\alpha_1(\bar{\lambda})}-2 \right) \right)\\
&\lesssim  \frac{C(\bar{\lambda})}{C(1)} \left(1-F_{\mathfrak{B}_2}(t_{k_l};1) \right)~2\sqrt{1-\delta}~\exp\left(-t_{k_l}^2\left(\frac{2\delta}{1-\delta} \right) \right),
\end{align*}
where the last line follows from noting that $$\frac{n-1}{2\alpha(\bar{\lambda})}-2\ge \frac{2}{1-\delta}-2=\frac{2\delta}{1-\delta}$$ and $C(\bar{\lambda})$ is bounded in $\bar{\lambda}\in [0,1]$. This leads to a contradiction and so we conclude $\bar{\lambda}(t) \to 1$.
\end{proof}

\begin{Rem}\label{rem:quantileapprox}
By rearranging \eqref{eq:as}, as $F_{\mathfrak{B}_2}(t;\lambda)\equiv F_{\mathfrak{B}_2}(t;1)$ we see that as $t\to \infty$
\begin{align*}
&\big(1-F_{\mathfrak{B}_n}(t;\bar{\lambda}(t))\big)\\
& \lesssim\big(1-F_{\mathfrak{B}_2}(t;1)\big)\left\{\sup_{\bar{\lambda}>\inf_{s\ge t}{\bar{\lambda}(s)}} \frac{C(\bar{\lambda})}{C(1)}
\frac{\sqrt{\alpha_1(\bar{\lambda})}}{\sqrt{n-1}}\exp\left(-t^2\left(\frac{n-1}{2\alpha_1(\bar{\lambda})}-2 \right) \right)\right\},
\end{align*}
in particular there exists a continuous function $\bar C$ with $\lim_{t\to \infty}\bar C(t)=1$ such that
\[\big(1-F_{\mathfrak{B}_n}(t;\bar{\lambda}(t))\big)\leq \bar C(t)\big(1-F_{\mathfrak{B}_2}(t;1)\big).\]
 \end{Rem}

As a consequence of the results above, we can identify the maximal choice of $F_{ \mathfrak{B}_n}(t)$ in terms of second order stochastic dominance. For this we recall the definition of a directionally convex function $f:\R^d\to \R$:
\begin{Defn}
Define the difference operator 
\begin{align*}
\Delta_i^{\epsilon}f(x)\coloneqq f(x+\epsilon e_i)-f(x), 
\end{align*}
where $e_i$ is the $i^{\text{th}}$ unit vector and $\epsilon>0$. A function $f:\R^d\to \R$ is called directionally convex if $\Delta_i^{\epsilon}\Delta_j^{\delta}f(x)\ge 0$ for all $x\in\R^n$, $1\le i,j\le n$ and all $\epsilon,\delta>0$.
\end{Defn}
We note that directional convexity neither implies nor is implied by conventional convexity for $d\ge 2$.

\begin{Rem}
Even for $n=3$, the determinant of $\pmb{\Sigma}_3^{1/2,0,1/2}-\pmb{\Sigma}_3^{1/3,1/3,1/3}$ is negative, so there is no hope of using results connecting positive semi-definiteness of matrices and stochastic order. In particular Bergmann \cite[Proposition 2.5]{bergmann} does not apply.
\end{Rem}

We use the following result:

\begin{Lem}[M\"uller, {\cite[Theorem 12]{muller2001stochastic}}]\label{Lemma::3}
Take vectors $\mathbf{X} \sim \mathcal{N}(\pmb{\mu},\mathbf{\Sigma})$ and $\mathbf{Y} \sim \mathcal{N}(\pmb{\mu}', \mathbf{\Sigma}')$. Then $\E(f(\pmb{X}))\le \E(f(\pmb{Y}))$  for all directionally convex functions $f$ if and only if $\pmb{\mu}=\pmb{\mu}'$ and $\sigma_{i,j} \le \sigma_{i,j}'$ for all $i,j \in \{1, \dots, n\}$.
\end{Lem}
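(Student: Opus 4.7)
The result is quoted from \cite{muller2001stochastic}; the natural approach splits along the ``iff'' into necessity (straightforward) and sufficiency (the substance).

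For \emph{necessity}, the plan is to evaluate the inequality on well-chosen directionally convex test functions. The coordinate projections $\pm x_i$ have all mixed second differences equal to zero and are directionally convex; applied with both signs they force $\pmb{\mu}=\pmb{\mu}'$. The monomials $f(x)=x_ix_j$ with $i\ne j$ satisfy $\Delta_i^\epsilon\Delta_j^\delta f=\epsilon\delta>0$ and vanishing other mixed differences, hence are directionally convex, and combined with equal means they yield $\sigma_{ij}\le\sigma_{ij}'$; the diagonal entries follow from $f(x)=x_i^2$.

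The heart of the matter is \emph{sufficiency}, which I would handle by Gaussian interpolation. Set $\mathbf{\Sigma}(t):=(1-t)\mathbf{\Sigma}+t\mathbf{\Sigma}'$ for $t\in[0,1]$, which remains positive semidefinite by convexity of the PSD cone (and can be regularised by $\epsilon I$, sending $\epsilon\downarrow 0$ at the end). Let $p_t$ denote the density of $\mathcal{N}(\pmb{\mu},\mathbf{\Sigma}(t))$; direct differentiation of the Gaussian density yields the classical identity
\[\frac{\partial p_t}{\partial t}=\tfrac{1}{2}\sum_{i,j}(\sigma'_{ij}-\sigma_{ij})\,\frac{\partial^2 p_t}{\partial x_i\partial x_j}.\]
Writing $\mathbf{X}_t$ for a random vector with law $p_t$, and assuming enough smoothness and integrability on $f$, integration by parts twice gives
\[\frac{d}{dt}\E[f(\mathbf{X}_t)]=\tfrac{1}{2}\sum_{i,j}(\sigma'_{ij}-\sigma_{ij})\,\E\!\left[\frac{\partial^2 f}{\partial x_i\partial x_j}(\mathbf{X}_t)\right].\]
Directional convexity is precisely $\partial_i\partial_j f\ge 0$ for all $i,j$ (in the distributional sense), and the hypothesis supplies $\sigma'_{ij}-\sigma_{ij}\ge 0$ for every pair, so each summand is non-negative; integrating over $t\in[0,1]$ gives $\E[f(\mathbf{X})]\le\E[f(\mathbf{Y})]$.

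The \textbf{main obstacle} is the regularity and approximation step bridging this differential formula to arbitrary directionally convex $f$: such $f$ need not be smooth and the cross-partials exist only distributionally, so the integration by parts is not a priori justified. The standard remedy is to mollify $f$ against a product kernel $\phi_\epsilon(x)=\prod_k\psi_\epsilon(x_k)$ with a non-negative bump $\psi_\epsilon$, which preserves directional convexity (translates and convex combinations of directionally convex functions remain directionally convex) and produces smooth approximants $f_\epsilon$; the inequality then holds in the smooth polynomially-bounded case, and extends to general directionally convex $f$ by truncation and monotone/dominated convergence.
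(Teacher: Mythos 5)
The paper does not prove this lemma at all: it is quoted verbatim from M\"uller \cite[Theorem 12]{muller2001stochastic}, so there is no internal proof to compare against. Your sketch is a correct reconstruction of the standard argument, and in fact it follows essentially the route taken in the cited source: necessity by testing against the affine functions $\pm x_i$ and the directionally convex monomials $x_ix_j$, $x_i^2$ (note you correctly use $\pmb{\mu}=\pmb{\mu}'$ first, so that $\E[X_iX_j]\le\E[Y_iY_j]$ translates into $\sigma_{ij}\le\sigma'_{ij}$), and sufficiency by Gaussian interpolation along $\mathbf{\Sigma}(t)=(1-t)\mathbf{\Sigma}+t\mathbf{\Sigma}'$ together with the heat-equation-type identity for the Gaussian density, which gives $\frac{d}{dt}\E[f(\mathbf{X}_t)]=\tfrac12\sum_{i,j}(\sigma'_{ij}-\sigma_{ij})\E[\partial_i\partial_j f(\mathbf{X}_t)]\ge 0$ since twice-differentiable directionally convex functions are exactly those with all second partials (including diagonal ones) nonnegative. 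The only caveats are the ones you already flag: general directionally convex $f$ need not be smooth or integrable, so the statement is really for those $f$ whose expectations exist, and the mollification (which does preserve directional convexity), the $\epsilon I$-regularisation of possibly singular covariances (which preserves the componentwise ordering), and the final truncation/limit step are where the remaining technical work sits; with those carried out, the sketch is complete and consistent with the cited result.
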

\begin{Cor}
For every $(p_1,\dots,p_n)$ and all $K\ge 0$ we have
\begin{equation}\label{eq:seconddom}
\int_K^{\infty} \P^{(p_1,\dots,p_n)}\big( \mathfrak{B}_n \ge t\big)dt\le\int_K^{\infty} \P^{(1/2,0,\dots,0,,1/2)}\big( \mathfrak{B}_n \ge t\big)dt,
\end{equation}
where $\mathfrak{B}_n = \sum_{i=1}^{n-1} \frac{\left| B(q_i) \right|}{n-1} $ and $\{B(q_i)\}_{1\le i\le n-1}$ has covariance matrix $\pmb{\Sigma}^{p_1,\dots,p_n}_n$ under $\P^{(p_1,\dots,p_n)}$.
\end{Cor}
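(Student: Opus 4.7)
The plan is to identify \eqref{eq:seconddom} as an inequality in increasing convex order and to reduce it to a direct application of Lemma \ref{Lemma::3}. By Fubini,
\[
\int_K^{\infty}\P^{(p_1,\dots,p_n)}\bigl(\mathfrak{B}_n\ge t\bigr)\,dt=\E^{(p_1,\dots,p_n)}\bigl[(\mathfrak{B}_n-K)_+\bigr],
\]
so the corollary is equivalent to proving $\E^{(p_1,\dots,p_n)}[(\mathfrak{B}_n-K)_+]\le \E^{(1/2,0,\dots,0,1/2)}[(\mathfrak{B}_n-K)_+]$. The obvious test function $y\mapsto((\sum_i|y_i|)/(n-1)-K)_+$ is convex, but its mixed second differences carry sign terms of the form $\operatorname{sgn}(y_i)\operatorname{sgn}(y_j)$, so it is \emph{not} directionally convex. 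Lemma \ref{Lemma::3} therefore does not apply to it directly, and (by the Remark preceding Lemma \ref{Lemma::3}) one cannot fall back on a PSD-ordering based convex-order argument either.

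To bypass this I would dominate the target by a directionally convex function. The elementary inequality $(\sum_i a_i)_+\le \sum_i(a_i)_+$ applied with $a_i\coloneqq |y_i|-K$ yields the pointwise bound
\[
\left(\frac{1}{n-1}\sum_{i=1}^{n-1}|y_i|-K\right)_+\le \frac{1}{n-1}\sum_{i=1}^{n-1}(|y_i|-K)_+.
\]
Denote the right-hand side by $g(y_1,\dots,y_{n-1})$. Each summand of $g$ depends on only a single coordinate $y_i$ and is convex in that coordinate, so every mixed difference $\Delta_i^{\epsilon}\Delta_j^{\delta}g$ with $i\ne j$ vanishes while the diagonal ones are nonnegative; hence $g$ is directionally convex.

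Lemma \ref{Lemma::2} gives $\pmb{\Sigma}_n^{p_1,\dots,p_n}\le \pmb{\Sigma}_n^{1/2,0,\dots,0,1/2}$ componentwise, and both Gaussian vectors are centred, so Lemma \ref{Lemma::3} applied to $g$ delivers
\[
\E^{(p_1,\dots,p_n)}\bigl[g(B(q_1),\dots,B(q_{n-1}))\bigr]\le \E^{(1/2,0,\dots,0,1/2)}\bigl[g(B(q_1),\dots,B(q_{n-1}))\bigr].
\]
Under the extremal measure all $q_i$ equal $1/2$, so $B(q_i)=B(1/2)$ for every $i$; the $n-1$ summands defining $g$ then coincide, in particular share a common sign, and the pointwise domination becomes an equality. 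Consequently the right-hand side collapses to $\E^{(1/2,0,\dots,0,1/2)}[(\mathfrak{B}_n-K)_+]$, and chaining the two bounds proves the corollary.

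The main obstacle is the first step: recognising that the obvious integrand is not directionally convex and constructing a directionally convex dominator whose upper bound is tight precisely at the conjectured maximiser $(1/2,0,\dots,0,1/2)$. Once the separable surrogate $g$ is in hand, the remainder is essentially a mechanical application of Lemmas \ref{Lemma::2} and \ref{Lemma::3}.
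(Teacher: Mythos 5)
Your proof is correct, and it takes a genuinely different---and in fact sounder---route than the paper. The paper's own proof applies Lemma \ref{Lemma::3} directly to $f(x)=\big((|x_1|+\dots+|x_{n-1}|)-K\big)^+$, asserting that this $f$ is directionally convex; as you observe, that assertion fails for $K>0$: for instance with two coordinates and $K=1$, the mixed difference $\Delta_1^{1/2}\Delta_2^{1/2}f(1/2,-1/2)=f(1,0)-f(1,-1/2)-f(1/2,0)+f(1/2,-1/2)=-1/2<0$, exactly because of the $\operatorname{sgn}(y_i)\operatorname{sgn}(y_j)$ terms you mention. So the paper's one-line justification has a gap precisely where you flagged one. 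Your repair is complete and correct: the Jensen-type pointwise bound $(\frac{1}{n-1}\sum_i(|y_i|-K))_+\le \frac{1}{n-1}\sum_i(|y_i|-K)_+$ holds; the surrogate $g$ is separable and componentwise convex, hence directionally convex; Lemma \ref{Lemma::2} gives componentwise domination of the centred covariance matrices, so Lemma \ref{Lemma::3} applies to $g$; and under $\pmb{\Sigma}_n^{1/2,0,\dots,0,1/2}$ all entries equal $1/4$, so the coordinates of the Gaussian vector coincide almost surely and the majorisation collapses to an equality, which is what makes the bound non-lossy at the extremal measure. Fubini then converts the stop-loss expectations back into \eqref{eq:seconddom}. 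The net effect is that your argument buys a valid proof of the corollary while simultaneously patching the paper's, at the modest price of introducing the separable dominator and the tightness observation at the rank-one extremal covariance.
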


\begin{proof}
From Lemma \ref{Lemma::2} and \ref{Lemma::3} we conclude that the distribution of $(B(q_i))_{1\le i \le n-1}$ is dominated by $\P^{(1/2,0,\dots,0,1/2)}$ in the sense that $\E[f(\pmb{X})]\le \E[f(\pmb{Y})]$ for all directionally convex functions $f$, where $\mathbf{X} \sim \mathcal{N}(\pmb{0}, \mathbf{\Sigma}_n^{p_1,\dots,p_n})$ and $\mathbf{Y} \sim \mathcal{N}(\pmb{0},\pmb{\Sigma}_n ^{1/2,0, \dots,0, 1/2})$. Furthermore we note that, for all $\lambda\in [0,1]$ and all $K\ge 0$,
\begin{align*}
\E_{\P^{(p_1,\dots,p_n)}}\big[( \mathfrak{B}_n-K)^+\big] =\int_K^{\infty} \P^{(p_1,\dots,p_n)}\big(\mathfrak{B}_n \ge t\big)dt.
\end{align*}
As $f(x_1, \dots, x_n)=((|x_1|+\dots +|x_n|)-K)^+$ is directionally convex, we conclude from Lemma \ref{Lemma::3} that \eqref{eq:seconddom} holds for all $K\ge 0$.
\end{proof}

\section{Proof of Theorem \ref{Theorem::1}}\label{sec:proof}
Given the results of Section \ref{sec:finite}, the proof of Theorem \ref{Theorem::1} is mainly concerned with a probabilistic extension of the results obtained in Section \ref{sec:finite} to general measures $\P\in \mathfrak{P}([0,1])$. This is obtained from weak compactness of the set $\mathfrak{P}([0,1])$  and the following powerful concentration inequality for empirical measures:

\begin{Lem}[Talagrand, \cite{talagrand1996new}, Theorem 1.4]\label{lem:talagrand}
Let $\mathbf{X}_1,\dots, \mathbf{X}_N$ be $\P^{\otimes}$-independent random variables with values in a measurable space $(S,\mathcal{S})$, let $\mathcal{F}$ be a countable class of measurable functions on $S$ and let 
\begin{align*}
&Z\coloneqq \sup_{f\in \mathcal{F}} \sum_{i=1}^N f(\mathbf{X}_i),\\
U\coloneqq \sup_{f\in \mathcal{F}}&\|f\|_{\infty} \quad \text{and}\quad V\coloneqq \E\left( \sup_{f\in \mathcal{F}} \sum_{i=1}^N f^2(\mathbf{X}_i)\right).
\end{align*}
Then there exists a universal constant $K$ (independent of $\mathbf{X}_i, N, S$ and $\mathcal{F}$) such that
\begin{align*}
\P^{\otimes}\left(|Z-\E(Z)|\ge t \right)\le K \exp\left(-\frac{t}{KU}\log \left( 1+\frac{tU}{V}\right) \right).
\end{align*}
\end{Lem}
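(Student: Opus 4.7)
The plan is to establish this by the entropy (modified log-Sobolev) method, which is the cleanest modern route to a Bennett-type moment-generating-function bound. It suffices to upper bound $\phi(\lambda) \coloneqq \log \E[e^{\lambda(Z - \E Z)}]$ on $\lambda \in [0, c/U)$ by a quantity of the form $\lambda^2 V/(2(1 - \kappa U \lambda))$: the Cramér--Chernoff inequality $\P^{\otimes}(Z - \E Z \ge t) \le \inf_{\lambda > 0} \exp(\phi(\lambda) - \lambda t)$ and optimisation in $\lambda$ then deliver the Bennett exponent $(t/KU)\log(1 + tU/V)$, and the lower tail follows by running the same argument after replacing $\mathcal{F}$ with its symmetrisation $\mathcal{F} \cup (-\mathcal{F})$ (the hypotheses on $U$ and $V$ are preserved).

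First I tensorise. With $\mathrm{Ent}(Y) \coloneqq \E[Y \log Y] - \E[Y]\log \E[Y]$ and $\mathrm{Ent}_i$ the conditional entropy given $\{\mathbf{X}_j : j\ne i\}$, the Ledoux tensorisation inequality gives $\mathrm{Ent}(e^{\lambda Z}) \le \sum_i \E[\mathrm{Ent}_i(e^{\lambda Z})]$. Each conditional entropy is bounded via the variational formula $\mathrm{Ent}_i(e^{\lambda Z}) \le \E_i'[e^{\lambda Z} \psi(-\lambda(Z - Z^{(i)}))]$ with $\psi(x) = e^x - x - 1$, where $Z^{(i)}$ replaces $\mathbf{X}_i$ with an independent copy $\mathbf{X}_i'$ and $\E_i'$ integrates out $\mathbf{X}_i'$. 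Using $\psi(-x) \le x^2/2$ for $x \ge 0$ reduces matters to controlling $\sum_i (Z - Z^{(i)})_+^2$. Countability of $\mathcal{F}$ allows measurable selection of a maximiser $f^{\ast}$ attaining (or near-attaining) the supremum defining $Z$; on $\{Z \ge Z^{(i)}\}$ one has $Z - Z^{(i)} \le f^{\ast}(\mathbf{X}_i) - f^{\ast}(\mathbf{X}_i')$, and combining this with $\|f^{\ast}\|_\infty \le U$ and $\sum_i f^{\ast}(\mathbf{X}_i)^2 \le \sup_{f\in\mathcal{F}} \sum_i f^2(\mathbf{X}_i)$ bounds the sum, in tilted expectation, by $V$ plus a correction of order $U\, \E_\lambda[Z - \E Z]$ where $\E_\lambda[\cdot] \coloneqq \E[\cdot\, e^{\lambda Z}]/\E[e^{\lambda Z}]$.

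Assembling the pieces yields a modified log-Sobolev inequality of the form $\lambda \phi'(\lambda) - \phi(\lambda) \le \tfrac12 \lambda^2 (V + c_1 U\, \phi'(\lambda))$; this is exactly the differential inequality Herbst's argument is designed for, and integrating from $\lambda = 0$ (using $\phi(0)=0$ and $\phi(\lambda)/\lambda \to 0$) gives $\phi(\lambda) \le \lambda^2 V/(2(1 - \kappa U \lambda))$ on the required range. The main obstacle is the step controlling $\E \sum_i (Z - Z^{(i)})_+^2$ by $V$: both suprema are attained at \emph{data-dependent} functions, so the coupling between the maximiser inside $V$ and the maximiser defining $Z$ must be broken without losing the Bennett scaling (a crude bound by $NU^2$ would give only a Hoeffding-type tail). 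Talagrand's original 1996 argument circumvented this through an induction based on the convex-distance isoperimetric inequality on product spaces; the entropy-method route sketched above (Ledoux, Massart, Rio, Bousquet) instead handles it via the tilted-expectation trick that converts the coupling into a self-correcting first-order ODI for $\phi$, paying in the universal constant $K$ for a proof that requires no isoperimetry.
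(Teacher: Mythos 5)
The paper does not prove this lemma at all: it is quoted verbatim from Talagrand (1996, Theorem 1.4) and used as a black box, so your attempt is really to be compared with the literature (Talagrand's induction via the convex distance inequality, or the entropy-method proofs of Ledoux, Massart, Rio, Bousquet, Klein). Your chosen route, the modified log-Sobolev/Herbst argument, is indeed a standard and legitimate way to obtain such a bound, but as written your sketch has two concrete gaps that prevent it from delivering the stated inequality.

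First, the target tail has a Bennett-type exponent $\frac{t}{KU}\log\bigl(1+\frac{tU}{V}\bigr)$, while the moment-generating-function bound you aim for, $\phi(\lambda)\le \lambda^2 V/\bigl(2(1-\kappa U\lambda)\bigr)$, is of sub-gamma (Bernstein) type: optimising Chernoff over $\lambda$ then gives only $\exp\bigl(-c\,t^2/(V+Ut)\bigr)$, whose exponent is of order $t/U$ when $tU/V$ is large, whereas the claimed exponent is of order $(t/U)\log(tU/V)$. No choice of the universal constant $K$ can absorb this logarithm, so your plan proves a strictly weaker statement in the large-deviation regime; to recover the stated bound by the entropy method one must keep a Poissonian MGF bound of the form $\phi(\lambda)\lesssim (V/U^2)\,\psi(c\lambda U)$ with $\psi(x)=e^x-x-1$ (as in Bousquet/Rio/Klein), which in particular forbids the early replacement of $\psi$ by its quadratic majorant. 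Second, the two-sided bound on $|Z-\E(Z)|$ does not follow by ``running the same argument for $\mathcal{F}\cup(-\mathcal{F})$'': enlarging the class changes $Z$ itself, and an upper-tail bound for $\sup_{f\in\mathcal{F}\cup(-\mathcal{F})}\sum_i f(\mathbf{X}_i)$ says nothing about $\P^{\otimes}(Z\le \E(Z)-t)$. The left tail requires a separate argument (it is handled automatically in Talagrand's isoperimetric proof, and by a distinct differential inequality in Massart's and Klein--Rio's entropy-method versions) and is known to be the harder half. Finally, the central step of your sketch --- ``assembling the pieces yields'' the self-correcting differential inequality after breaking the coupling between the maximiser defining $Z$ and the one inside $V$ --- is asserted rather than carried out, and that is precisely where the substance of the theorem lies; given that the paper needs only the citation, the honest alternative is to quote Talagrand (or Massart/Bousquet) rather than to gesture at their proofs.
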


We now prove Theorem \ref{Theorem::1} for general measures $\P\in \mathfrak{P}([0,1])$, by extending our result for finitely supported measures.
\begin{proof}[Proof of Theorem \ref{Theorem::1}]
For any given measure $\P\in \mathfrak{P}([0,1])$ consider a sequence $(\P^n)_{n\in\N}$ of measures supported on the equally spaced grid
\[\mathbf{x}_n = \Big\{x_i=\frac{i-1}{n-1}\Big\}_{i=1}^n = \Big\{0, \frac{1}{n-1}, \frac{2}{n-1}, \dots, 1\Big\}\]
such that we have the weak convergence $\P^n\Rightarrow\P$. Consequently, we have the convergence in law (under $\P^\otimes$)
\begin{align*}
(n-1)^{-1}\mathfrak{B}_n &=\sum_{i=1}^{n-1} \left| B(q_i) \right|(x_{i+1}-x_i)=\int_0^1 |B(F_{\P^n}(t))|dt  \\&\Rightarrow \int_0^1 |B(F_{\P}(t))|dt 
\end{align*}
as $n\to \infty$. This suggests that it is sufficient to consider finitely supported measures.

Now fix $\alpha\in [0,1]$. For every $n \in \N$ we denote by $\mathfrak{P}(\mathbf{x}_n)$ the measures supported on $\mathbf{x}_n\subset [0,1]$:
$$\mathfrak{P}(\mathbf{x}_n)\coloneqq \Big\{\P\in \mathfrak{P}([0,1]) \ \Big| \ \P=\sum_{i=1}^n p_i\delta_{x_i}\Big\}.$$ We conclude by Proposition \ref{Prop::general} that the quantile $F^{-1}_{\mathfrak{B}_n}(\alpha)$ is maximised for measures in $\mathfrak{P}(\mathbf{x}_n)$ by $$\P^{\lambda^n}\coloneqq\lambda^n \frac{\delta_0+\delta_1}{2}+(1-\lambda^n)\,\mathcal{U}(\mathbf{x}_n),$$ where $\lambda^n=\lambda^n(\alpha)\in [0,1]$ and $\mathcal{U}(\mathbf{x}_n)$ is the uniform measure over $\mathbf{x}_n$. After taking a subsequence, $\lambda^n$ converges to some $\lambda=\lambda(\alpha)\in [0,1]$ and thus
\begin{align}\label{eq:Plamn}
 \P^{\lambda^n}&=\lambda^n \frac{\delta_0+\delta_1}{2}+(1-\lambda^n)\,\mathcal{U}(\mathbf{x}_n)\nonumber\\
&\Rightarrow \P^{\lambda}\coloneqq\lambda \frac{\delta_0+\delta_1}{2}+(1-\lambda)\,\mathcal{U}([0,1]).
\end{align}
We wish to show that $\P^\lambda$ is the measure maximising the desired quantile in $\mathfrak{P}([0,1])$.

Next we need to establish that a general measure can be approximated sufficiently well by a measure on $\mathbf{x}_n$, in particular, our intermediate goal is to establish
\begin{align}\label{eq:1}
\lim_{N\to \infty} \sup_{\P\in \mathfrak{P}([0,1])}F^{-1}_{N^{1/2}\mathcal{W}(\hat{\P}_N,\P)} (\alpha)=\lim_{n\to \infty} \lim_{N\to \infty}\sup_{\P\in \mathfrak{P}(\mathbf{x}_n)} F^{-1}_{N^{1/2}\mathcal{W}(\hat{\P}_N,\P)} (\alpha),
\end{align}
where $F^{-1}$ is the quantile under the corresponding measure $\P^\otimes$.
To do this, we follow a similar outline to that in del Barrio, Gin\'e and Matr\'an \cite[Proofs of Theorems 4.2 \& 5.1]{del1999central}, which in turn heavily relies on Lemma \ref{lem:talagrand}. Given our observation random variables $X_i$, define the quantised versions
\begin{align*}
X_i^{\lfloor n\rfloor}\coloneqq \frac{\lfloor nX_i\rfloor}{n}.
\end{align*}
We denote by $\P^{\lfloor n\rfloor}$ and $\hat{\P}^{\lfloor n\rfloor}_N$ the corresponding true and empirical distributions of our quantised observations.
In order to obtain \eqref{eq:1}, we need to show weak convergence to zero of 
\[N^{1/2}\mathcal{W}(\hat{\P}_N, \P)-N^{1/2}\mathcal{W}(\hat{\P}_N^{\lfloor n\rfloor}, \P^{\lfloor n\rfloor})\]
for $n\to \infty$, uniformly in $N$. Writing
\begin{align*}
Z_N\coloneqq N\mathcal{W}(\hat{\P}_N, \P), \qquad Z^{\lfloor n\rfloor}_N = N\mathcal{W}(\hat{\P}_N^{\lfloor n\rfloor}, \P^{\lfloor n\rfloor})
\end{align*}
we are interested in the convergence of $N^{-1/2}|Z_N-Z_N^{\lfloor n\rfloor}|$. 
Define the random variables
\begin{align*}
\mathbf{X}_i(t)&\coloneqq h_{X_i}(t)=\big(\mathds{1}_{\{X_i> t\}}-\P(X_i> t)\big)-\big(\mathds{1}_{\{X^{\lfloor n\rfloor}_i> t\}}-\P(X_i^{\lfloor n\rfloor}> t)\big)\\
& = \mathds{1}_{\{X_i^{\lfloor n\rfloor} \le t < X_i\}}-\E\left(\mathds{1}_{\{X_i^{\lfloor n\rfloor} \le t < X_i\}}\right).
\end{align*}
Since $L_1(\R)$ is separable, by an application of the Hahn--Banach theorem there exists a countable subset of the unit ball of $L_{\infty}(\R)$, which we denote by $\mathcal{F}$, such that $$\|h\|_{L_1(\R)}=\sup_{f\in \mathcal{F}}\langle f,h\rangle\quad \text{for all }h\in L_1(\R).$$ By construction, 
\begin{align}\label{eq:supfbound}
\sup_{f\in \mathcal{F}}\sum_{i=1}^N  \langle f, \mathbf{X}_i\rangle &=\left\|\sum_{i=1}^N   \mathbf{X}_i \right\|_{L_1(\R)}\nonumber\\
&=\int_0^1\bigg| \sum_{i=1}^N \Big(\mathds{1}_{\{X_i> t\}}-\P(X_i> t)-\mathds{1}_{\{X^{\lfloor n\rfloor}_i> t\}}+\P(X_i^{\lfloor n\rfloor}> t)\Big)\bigg|dt\nonumber\\
&=N\int_0^1 \Big|F_{\hat{\P}_N}(t)-F_{\P}(t)-F_{\hat{\P}_N^{\lfloor n\rfloor}}(t)+F_{\P^{\lfloor n\rfloor}}(t)\Big|dt\nonumber\\
&\ge N \bigg|\int_0^1 \left|F_{\hat{\P}_N}(t)-F_{\P}(t)\right| dt-\int_0^1 \left|F_{\hat{\P}_N^{\lfloor n\rfloor}}(t)-F_{\P^{\lfloor n\rfloor}}(t)\right| dt\bigg|\nonumber\\
& = |Z_N-Z^{\lfloor n\rfloor}_N| .
\end{align}

For the family of functions $\mathcal{F}$ and random variables $\{\mathbf{X}_i\}_{i=1}^N$, we next estimate the constants $U$ and $V$ appearing in Lemma \ref{lem:talagrand}. Clearly $U\leq 1$ by definition of $\mathcal{F}$ and
\begin{align*}
V&=\E\bigg(\sup_{f\in \mathcal{F}} \sum_{i=1}^N \langle f, \mathbf{X}_i\rangle^2\bigg)\\
&\le\E\bigg(\sup_{f\in \mathcal{F}}\sum_{i=1}^N \bigg( \int_0^1 |f(t)|\left| \mathds{1}_{\{X_i> t\}}-\P(X_i> t)  -\mathds{1}_{\{X_i^{\lfloor n\rfloor}> t\}}+\P(X^{\lfloor n\rfloor}_i> t) \right|dt\bigg)^2\,\bigg) \\
&\leq \E\bigg(\sum_{i=1}^N \bigg( \int_0^1 \left| \mathds{1}_{\{X_i^{\lfloor n\rfloor} \le t < X_i\}}-\E\left(\mathds{1}_{\{X_i^{\lfloor n\rfloor} \le t < X_i\}}\right) \right|dt\bigg)^2\,\bigg) \\
&\leq  N \int_0^1 \mathrm{Var}\left(\mathds{1}_{\{X_1^{\lfloor n\rfloor} \le t < X_1\}}\right)dt\leq N \int_0^1 \E\left(\mathds{1}_{\{X_1^{\lfloor n\rfloor} \le t < X_1\}}\right)dt\\
&\le \frac{ N}{n}. 
\end{align*}
Similarly, 
we  compute
\begin{align*}
 0&\leq\E\left(\sup_{f\in \mathcal{F}}\sum_{i=1}^N  \langle f, \mathbf{X}_i\rangle\right) \\&=N\int_0^1\E\bigg| \left(\frac{1}{N}\sum_{i=1}^N \mathds{1}_{\{X_i^{\lfloor n\rfloor} \le t < X_i\}}- \E\left(\mathds{1}_{\{X_i^{\lfloor n\rfloor} \le t < X_i\}}\right)\right) \bigg|dt\\
 &\leq N\int_0^1\sqrt{\mathrm{Var}\left(\mathds{1}_{\{X_1^{\lfloor n\rfloor} \le t < X_1\}}\right)/N}dt\leq \sqrt{N}\int_0^1\sqrt{\E\left(\mathds{1}_{\{X_i^{\lfloor n\rfloor} \le t < X_i\}}\right)}dt \\
 &\leq \sqrt{N} \sqrt{\E\left(\int_0^1 \mathds{1}_{\{X_1^{\lfloor n\rfloor} \leq t < X_1\}} dt\right)} \leq \sqrt{\frac{N}{n}}.
 \end{align*}
Combining with \eqref{eq:supfbound}, we have
\[\bigg|\sup_{f\in \mathcal{F}}\sum_{i=1}^N  \langle f, \mathbf{X}_i\rangle - \E\left(\sup_{f\in \mathcal{F}}\sum_{i=1}^N  \langle f, \mathbf{X}_i\rangle\right)\bigg|\geq |Z_N-Z^{\lfloor n\rfloor}_N|  - \sqrt{\frac{N}{n}}.\]
We conclude, using Lemma \ref{lem:talagrand},
that there exists a universal constant $K$ such that
\begin{align}\label{eq:talagrandbasic}
\P^{\otimes}\left( \left|Z_N-Z_N^{\lfloor n\rfloor}\right| \ge t +  \sqrt{\frac{N}{n}}\right)
&\le K\exp\left( -\frac{t}{K}\log\left(1+\frac{tn}{N} \right) \right)
\end{align}
for all $n\in \N$ and $t>0$.  Evaluating \eqref{eq:talagrandbasic} at $t = \sqrt{N/\log(n)}$ yields
\begin{align}\label{eq:talagrand}
&\P^\otimes\left(N^{-1/2}\left|Z_N-Z_N^{\lfloor n\rfloor}\right| \ge \frac{1}{\sqrt{\log(n)}} +\frac{1}{\sqrt{n}}\right)\nonumber\\
&\le K\exp\left( \frac{ -\sqrt{N}}{K\sqrt{\log(n)}}\log\bigg(1+\frac{ n}{\sqrt{N\log(n)}} \bigg) \right)\nonumber\\
&\le K\exp\left( \frac{-1 } {K\sqrt{\log(n)}}\log\bigg(1+\frac{ n}{\sqrt{\log(n)}} \bigg) \right),
\end{align}
where the third line follows from the fact that $x\mapsto x\log(1+c/x)$ is increasing for all $c>0$. In particular, the bound in \eqref{eq:talagrand} is independent of $N$ and the distribution $\P\in \mathfrak{P}([0,1])$, so we have the convergence in probability (under $\P^\otimes$) of  $N^{-1/2}|Z_N^{\lfloor n\rfloor}-Z_N|\to 0$  as $n\to \infty$, uniformly in $N\in\N$ and $\P\in \mathfrak{P}([0,1])$.

We now use this convergence  to establish \eqref{eq:1}. Fix $\epsilon>0, \alpha\in [0,1]$ and take $n\in \N$ large enough that,
\begin{align*}
 \max\bigg\{&\frac{1}{\sqrt{\log(n)}} +\frac{1}{\sqrt{n}}, K\exp\bigg(\frac{-1}{K\sqrt{\log(n)}} \log \bigg( 1+\frac{n}{\sqrt{\log(n)}}\bigg)\bigg)\bigg\}\le \epsilon.
\end{align*}
For any $x\in \R$ such that 
\begin{align*}
\P^\otimes\left(N^{-1/2}Z^{\lfloor n\rfloor}_N\le x \right)\ge\alpha+\epsilon,
\end{align*}
by \eqref{eq:talagrand}  we have
\begin{align*}
&\P^\otimes\left(N^{-1/2}Z_N\le x+\epsilon \right)\\
&\ge \P^\otimes\left(N^{-1/2}Z^{\lfloor n\rfloor}_N\le x,  \quad N^{-1/2}|Z_N^{\lfloor n\rfloor}-Z_N|\le \epsilon \right)\\
&\ge \alpha+\epsilon-\epsilon=\alpha.
\end{align*}
It follows that
\begin{align*}
F^{-1}_{N^{1/2}\mathcal{W}(\hat{\P}_N,\P)}(\alpha-\epsilon)-F^{-1}_{N^{1/2}\mathcal{W}(\hat{\P}_N^{\lfloor n\rfloor},\P^{\lfloor n\rfloor})}(\alpha)\le \epsilon.
\end{align*}
In particular, this implies
\begin{align*}
\lim_{N\to \infty} \sup_{\P\in \mathfrak{P}[0,1]} F^{-1}_{N^{1/2}\mathcal{W}(\hat{\P}_N, \P)} (\alpha)\le \epsilon+\lim_{N\to \infty} \sup_{\P\in \mathfrak{P}(\mathbf{x}_n)}F^{-1}_{N^{1/2}\mathcal{W}(\hat{\P}_N,\P)}(\alpha+\epsilon),
\end{align*}
from which \eqref{eq:1} follows.

Now that we have the desired uniform approximation result \eqref{eq:1}, we can combine \eqref{eq:Plamn} with our result for finitely supported measures (Lemma \ref{Lemma::gine}), to conclude 
\begin{align*}
\lim_{N\to \infty}\sup_{\P\in \mathfrak{P}([0,1])} F^{-1}_{N^{1/2}\mathcal{W}(\hat{\P}_N,\P)}(\alpha) &=\lim_{n\to \infty}\lim_{N\to \infty} \sup_{\P\in \mathfrak{P}(\mathbf{x}_n)} F^{-1}_{N^{1/2}\mathcal{W}(\hat{\P}_N,\P)}(\alpha) \\
&=\lim_{n\to \infty}  \sup_{\P\in \mathfrak{P}(\mathbf{x}_n)} F^{-1}_{\sum_{i=1}^{n-1}\frac{ \left| B(q_i) \right|}{(n-1)}} (\alpha) \\
&=\lim_{n\to \infty}  \sup_{\P\in \mathfrak{P}(\mathbf{x}_n)} F^{-1}_{\int_0^1 |B(F_{\P}(t))|\, dt}(\alpha) \\
 &= \lim_{n\to \infty} F^{-1}_{\int_0^1 |B(F_{\P^{\lambda_n}}(t))|\,dt}(\alpha)\\
&=F^{-1}_{\int_0^1 |B(F_{\P^{\lambda}}(t)|\, dt}(\alpha)\\
 &=\lim_{N\to \infty} F^{-1}_{N^{1/2}\mathcal{W}(\hat{\P}_N,\P^\lambda)}(\alpha).
\end{align*}
For fixed $\epsilon>0$ it is thus sufficient to consider $\P^{\lambda^n}$ instead of $\P^{\lambda}$ for large $n\in \N$ in the statement of Theorem \ref{Theorem::1}. As the asymptotic relations $\lim_{\alpha\downarrow 0}\lambda^n(\alpha)=0$ and $\lim_{\alpha\uparrow 1}\lambda^n(\alpha)=1$ follow from Propositions \ref{Prop:2} and \ref{Prop:1}, this concludes the proof. 
\end{proof}

\section{Numerical results for measures with finite support}\label{sec:numerics}

We illustrate the implications of Theorem \ref{Theorem::1} for measures with finite support, i.e. Proposition \ref{Prop::general}, by a numerical experiment. We also begin to consider measures in higher dimension, optimising  the empirical quantiles associated with some measures in $\mathfrak{P}([0,1]^2)$.

\subsection{Illustration of Proposition \ref{Prop::general} } Here we look at the distribution of the discretised Brownian bridge $(B(q_i))_{1\leq i\leq n-1}$ in a ball $\Xi(t)$  as a function of the radius $t$. To this end, we estimate the integral in \eqref{eq:bb} via Monte Carlo
\begin{equation}\label{eq6}
F_{\mathfrak{B}_n}(t) = \P\big( (B(q_1),\dots,B(q_{n-1})) \in \Xi(t) \big) \approx  \frac{1}{M} \sum_{i=1}^M \mathbf{1}_{\{ \vect{B}^{(i)}\in\Xi(t)\}}, \quad t\geq 0,
\end{equation}
 where $\vect{B}^{(i)}$, $i=1,\dots,M$ are i.i.d. draws from $\mathcal{N}(\mathbf{0},\pmb{\Sigma}_n^{p_1,\dots,p_n})$. Figure \ref{f5} shows the result with a Monte Carlo sample of size $M=10^5$ for $n=10$ (left panel) and $n=1000$ (right panel). In both panels we plot the extremal cases from $\P_{\delta}=\frac{1}{2}(\delta_0+\delta_1)$ in black and from $\P_{\mathcal{U}}=\mathcal{U}(\{x_1,\dots,x_n\})$ in blue. We also include the distribution function from convex combinations  $\P^{\lambda}=\lambda\P_{\delta}+(1-\lambda)\P_{\mathcal{U}}$, $\lambda\in(0,1)$ in red and from
  randomly generated measures in green.
\begin{figure}[h!] 
\makebox[\textwidth][c]{ 
		\includegraphics[height=7.5cm,trim=0 0 10 0,clip]{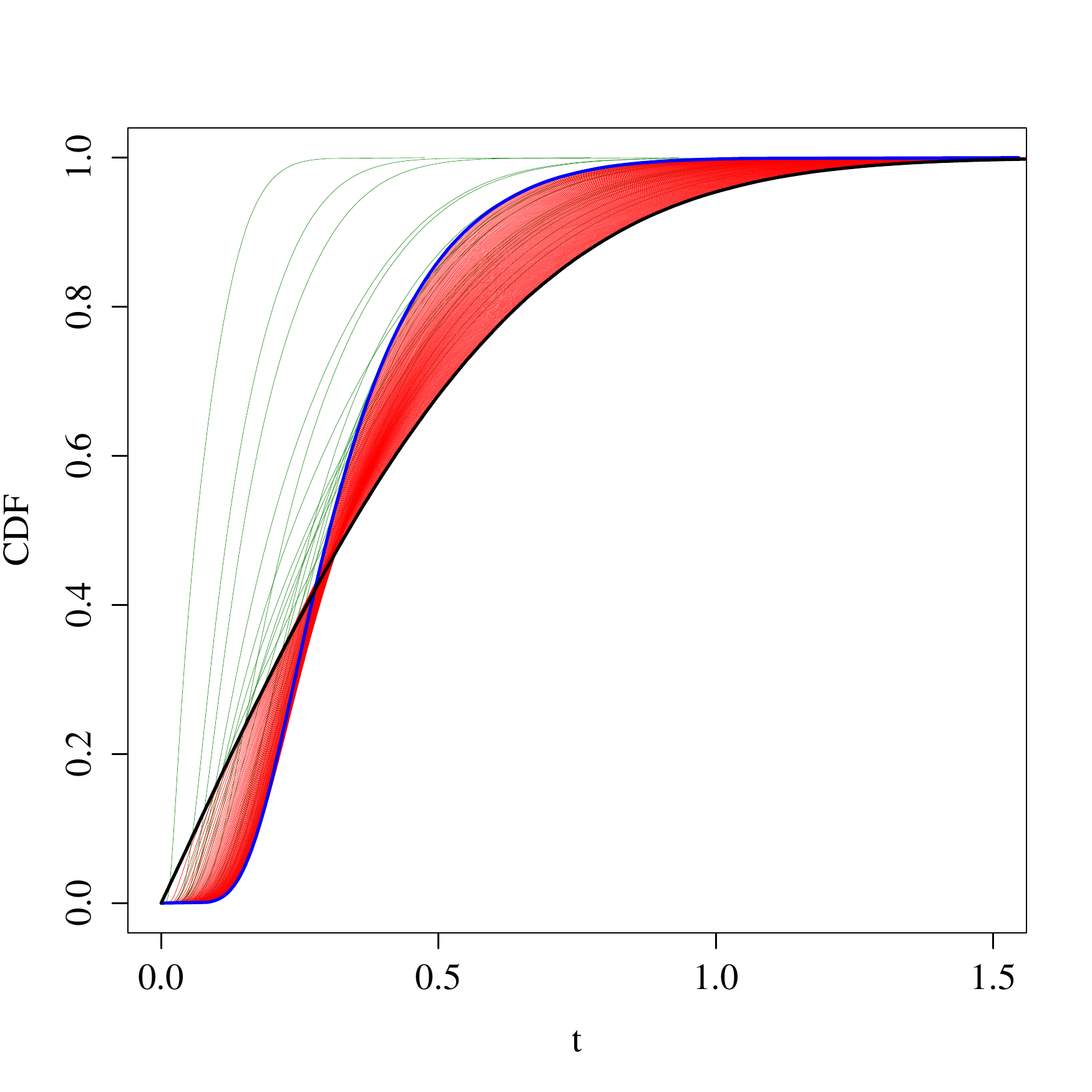}
		\includegraphics[height=7.5cm,trim=20 0 10 0,clip]{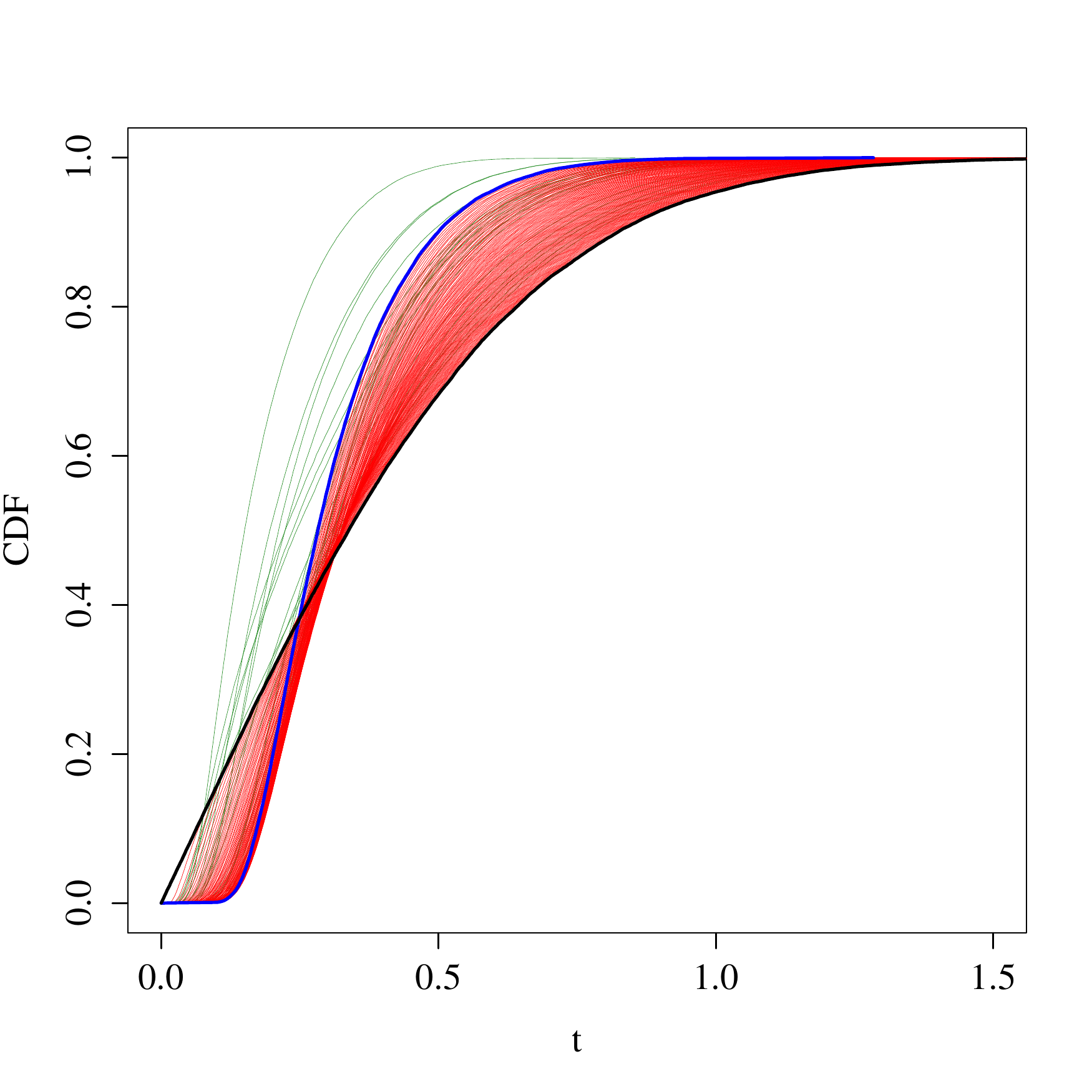}
}
\caption{The distribution function of the rescaled Wasserstein distance $\sum_{i=1}^{n-1} |B(q_i)|(x_{i+1}-x_i)$ for different measures with support $\{x_1,\dots,x_n\}$.  \textbf{Left:} $n=10$,    \textbf{Right:} $n=1000$. In both figures, the case $\P=\frac{1}{2}(\delta_0+\delta_1)$ is  in black and  $\P=\mathcal{U}(\{x_1,\dots,x_n\})$ in blue, and convex combinations of the two in red. Green lines are from 20 different measures generated randomly.}
\label{f5}
\end{figure}
Note that, for $\alpha \approx 0.4$, intermediate distributions with $\lambda\in(0,1)$ yield the most extreme quantiles. This is highlighted in Figure \ref{f6} for the case $n=10$, where we plot the $\lambda$ which  generates the dominating measure $\P^{\lambda}$ as a function of $\alpha\in(0,1)$. (The `step function' behaviour of the graph is due to numerical noise.) Note the behaviour $\lambda(\alpha)\uparrow 1$ as $\alpha\uparrow 1$ and $\lambda(\alpha)\downarrow 0$ as $\alpha\downarrow 0$, that is, dominance by the two extreme cases $\P_{\delta}$  and $\P_{\mathcal{U}}$ respectively. This is as expected from  Theorem \ref{Theorem::1}.
\begin{figure}[h!] 
\makebox[\textwidth][c]{ 
		\includegraphics[width=7.5cm,trim=0 0 0 0,clip]{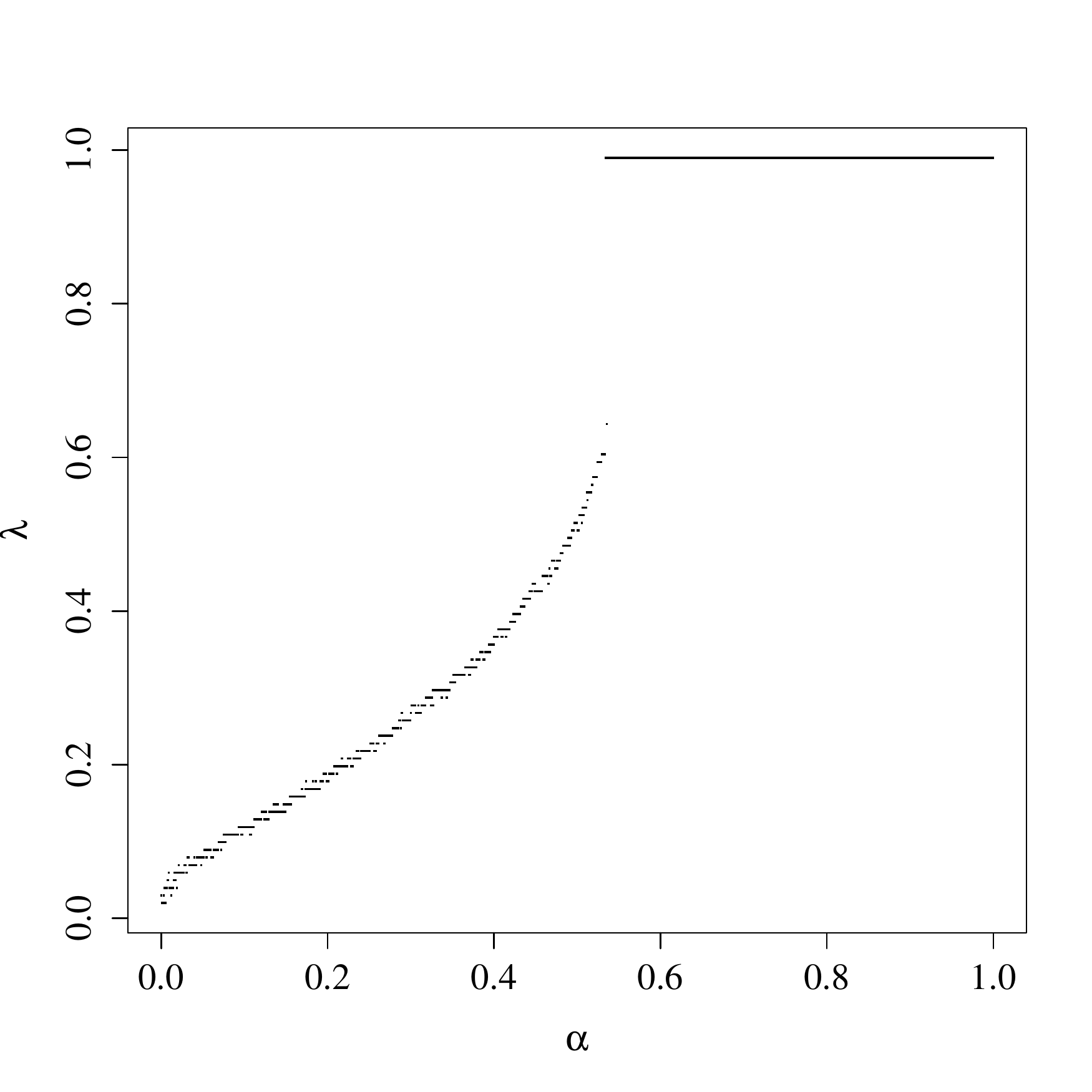}  
}
\caption{Estimated value of $\lambda(\alpha)$  at each confidence level $\alpha$, as in Proposition \ref{Prop::general}, computed in the case $n=10$.}
\label{f6}
\end{figure}

\subsection{Empirical quantiles for measures on the unit square}
Consider measures in  $\mathfrak{P}([0,1]^2)$ with finite support $\vect{x}\subset[0,1]^2$. For simplicity, we take $\vect{x}$ to be a  Cartesian product 
$$\vect{x}  = \{(x_i,y_j):i=1,\dots,n,j=1,\dots,m\},$$
which we index  as a $n\times m$ matrix with $(x,y)$-elements  
$$\vect{x}_{ij} = (x_i,y_j).$$
A measure $\P\in\mathfrak{P}([0,1]^2)$ with support $\vect{x}$ can then be identified with  a $n\times m$ matrix $\vect{p}$  with elements $p_{ij} \in [0,1]$ summing up to one 
$$\P(\{\vect{x}_{ij}\}) = \P(\{(x_i,y_j)\})={p}_{ij}, \quad \sum_{i,j} {p}_{ij} = 1.$$
We denote by $\mathfrak{P}(\vect{x})$ the set of all such probability matrices. 

The Wasserstein distance with $\ell_1$-norm between $\P$ and a measure $\tilde{\P}$ with support $\tilde{\vect{x}}$ of size $\tilde{n}\times\tilde{m}$ and probabilities  $\tilde{\vect{p}}\in\mathfrak{P}(\tilde{\vect{x}})$  is given by
\begin{equation}\label{eqq2}
\mathcal{W}(\P,\tilde{\P}) = \inf_{\boldsymbol{\pi}\in\Pi(\P,\tilde{\P})}  \sum_{i,j,k,l} ||\vect{x}_{ij}-\tilde{\vect{x}}_{kl}||_{\ell_1} \, \pi_{ijkl} 
\end{equation}
where $\Pi(\P,\tilde{\P})$ is the set of couplings in $\mathfrak{P}([0,1]^4)$ that can be identified with  tensors $\boldsymbol{\pi}$ of dimension $n \times m \times \tilde{n} \times \tilde{m}$ such that
\begin{equation}\label{eqq1}
 \sum_{k,l} \pi_{ijkl}  = p_{ij}, \quad \sum_{i,j} \pi_{ijkl}  = \tilde{p}_{kl}.
\end{equation}
Note that the objective in \eqref{eqq2} with constraints \eqref{eqq1} is a linear problem which can be solved numerically by linear programming. 

To investigate empirically the  distribution of $\mathcal{W}(\P,\hat{\P}_N)$ we fix a reference measure $\P=(\vect{x},\vect{p})$ with   support  being a regular Cartesian product of equidistant points $x_i,y_i = (i-1)/(n-1)$, $i=1,\dots,n$, that is, of size $n\times n$. Figure  \ref{f1} shows an example measure for $n=3$.
\begin{figure}[h!] 
	\centering
		\includegraphics[width=10cm]{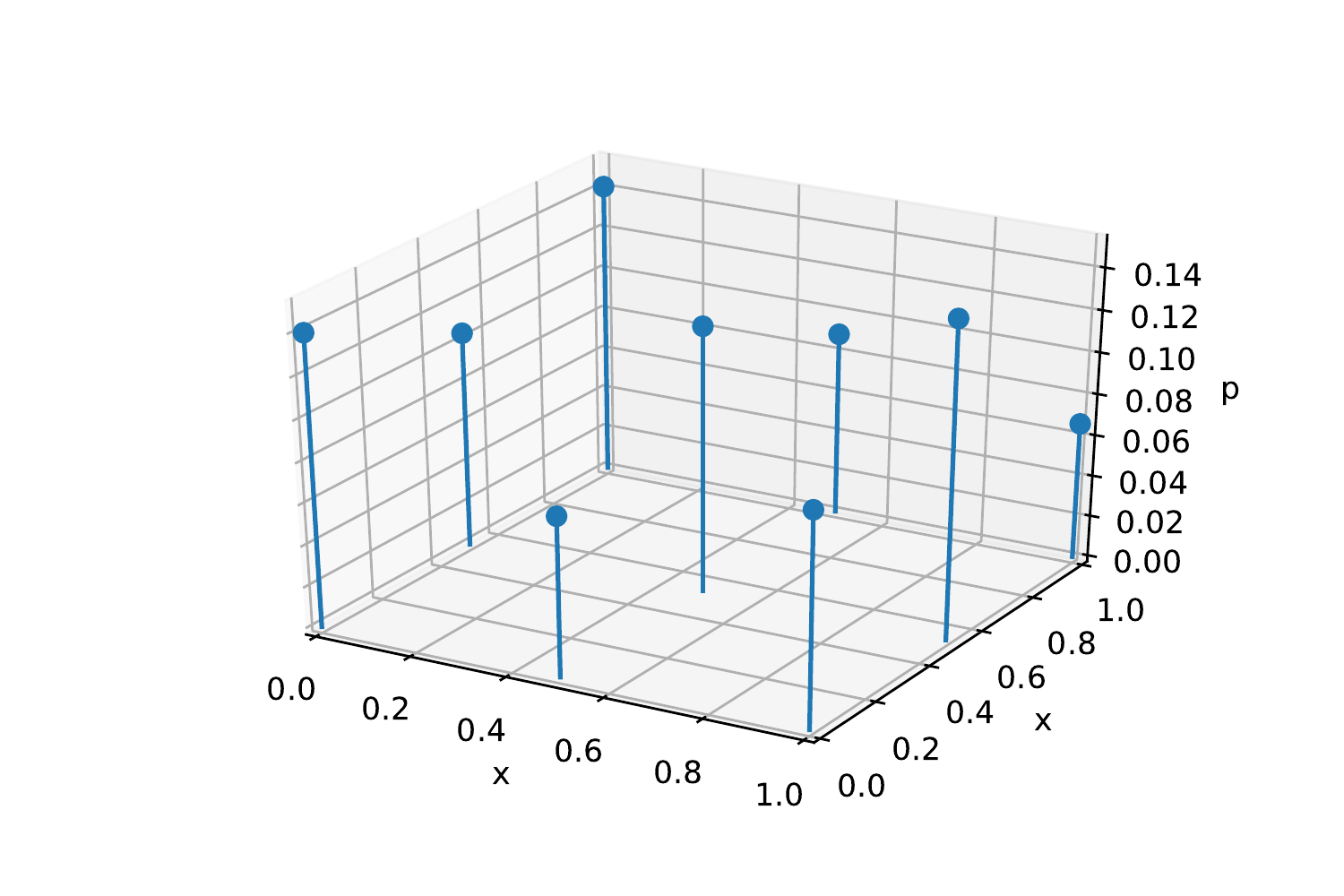}
	\caption{A probability measure on $[0,1]^2$ supported on the vertices of  a $3\times3$  grid.}
\label{f1}
\end{figure}

Next, we sample points $X_1,\dots,X_N$ from to $\P$ and associate the empirical measure $\hat{\P}_N=(\vect{x},\hat{\vect{p}}_N)$, where $\hat{\vect{p}}_N$ has elements
$$ \hat{p}_{ij}= \frac{ \#\{X_k:X_k=\vect{x}_{ij}\} }{N}.$$
Using this, we can compute a sample point of $\mathcal{W}(\P,\hat{\P}_N)=:\mathcal{W}(\vect{p},\hat{\vect{p}}_N)$.  
If we repeat this procedure $M$  times we obtain an $M$-sample of the distance from which we can compute an empirical distribution function $\hat{F}_{\mathcal{W}(\vect{p},\hat{\vect{p}}_N)}$. Figure \ref{f2} shows the result with the  measure  in Figure \ref{f1} as reference for $N=100$ and $M=500$ generated outcomes of the Wasserstein distance with $\ell_1$-norm. 
To compute the latter, we use the interior-point method of Andersen and Andersen \cite{andersen2000mosek} to solve the linear program.
\begin{figure}[h!] 
	\centering
		\includegraphics[width=8cm]{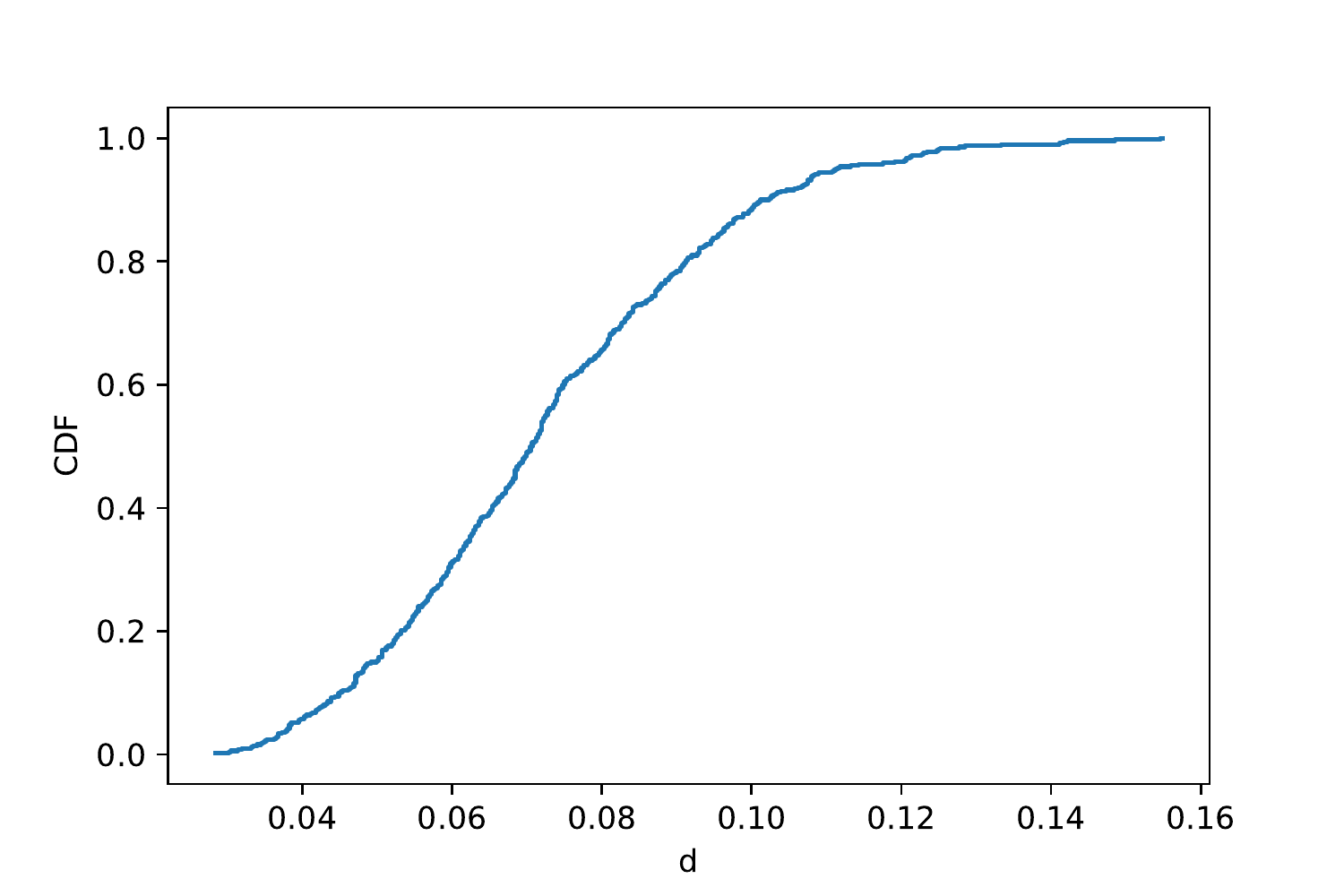}
	\caption{Empirical distribution function of $\mathcal{W}(\P,\hat{\P}_{100})$, from $M=500$  draws, with the measure shown in Figure \ref{f1} as generating measure $\P$.}
\label{f2}
\end{figure}

For a fixed confidence level $\alpha\in[0,1]$, we can now approach optimising the empirical quantile function
\begin{equation}\label{eqx1}
\text{argmax}_{\vect{p}\in \mathfrak{P}(\vect{x})} \hat{F}^{-1}_{\mathcal{W}(\vect{p},\hat{\vect{p}}_N)}(\alpha)
\end{equation}
with a numerical optimiser. 

Note, however, that the global problem \eqref{eqx1} is non-trivial. The objective is  high dimensional, expensive to evaluate  (each sample-point of the distance is  computed with a numerical linear-program solver) and with estimation error---in all making a case for noisy, gradient-free  optimisation. To this end, we employ Bayesian optimisation, see e.g. Osborne, Garnett and Roberts \cite{osborne2009gaussian} or Shahriari et al \cite{shahriari2016taking} for a review.  These methods use Bayesian theory to  frame the optimisation as a  sequential decision problem  through a probabilistic surrogate of the ``black-box'' objective function. The surrogate --- in our case a noisy Gaussian process model (see Williams and Rasmussen \cite{rasmussen2003gaussian}) of the empirical quantile function --- is used to carefully select a set of unseen arguments as candidates for an optimum, where the selection is with a Bayesian  expected-loss criterion. The true objective function is then evaluated at  selected locations, where a new potential optimum  is recorded. The surrogate model is also updated and re-estimated with acquired data; in our case the Gaussian process posterior and its hyper-parameters. The sequential procedure  continues until a  stopping criterion kicks in, here a given number  of function evaluations.
\begin{figure}[h!] 
	\centering
		\includegraphics[width=10cm]{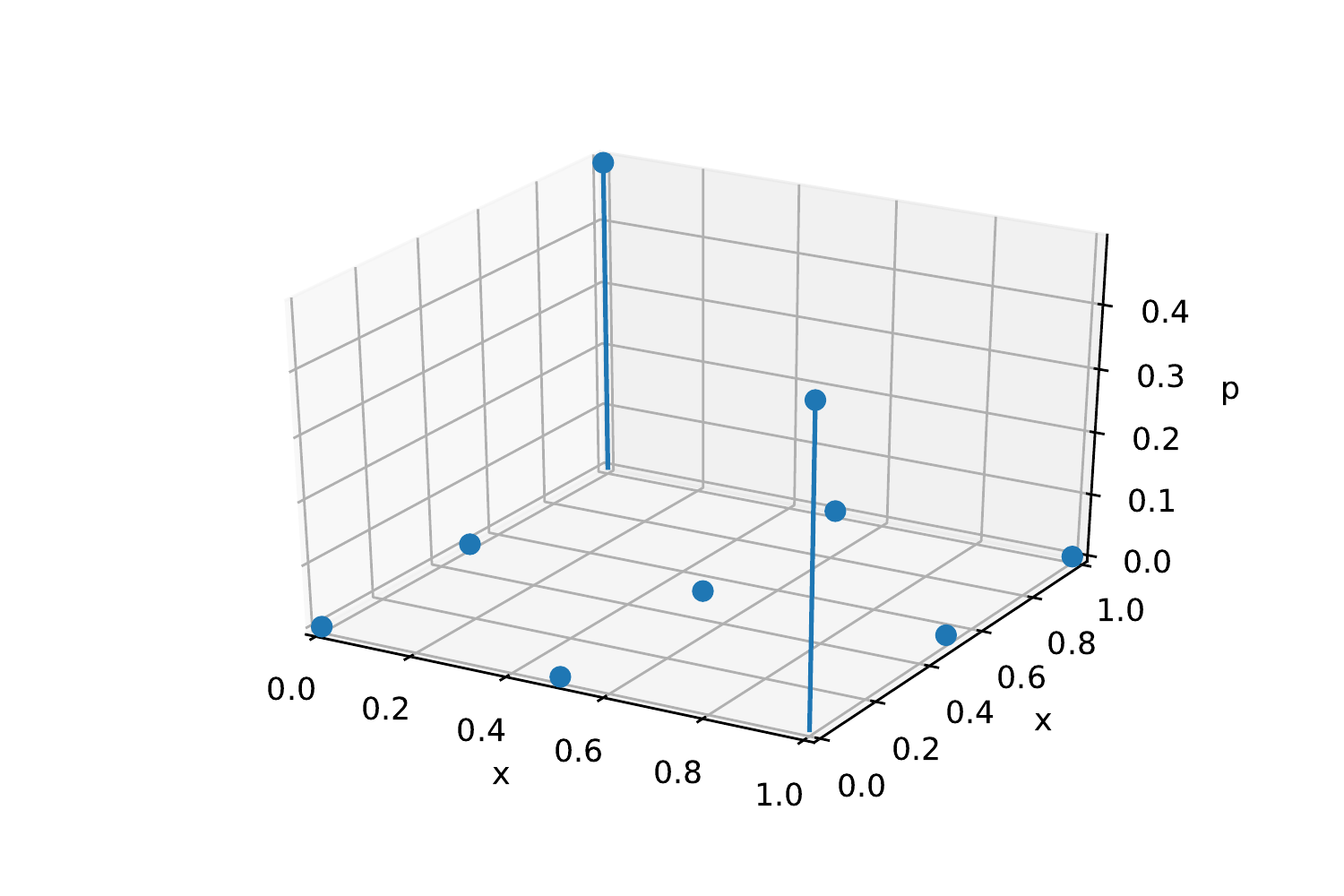}
	\caption{A maximising measure of the empirical quantile function of $\mathcal{W}(\P,\hat{\P}_{100})$ at level $\alpha=0.95$, from $M=100$ draws.}
\label{f3}
\end{figure}

Figure \ref{f3} shows a maximising measure of $\hat{F}^{-1}_{\mathcal{W}(\P,\hat{\P}_{100})}(0.95)$    where the distribution function is estimated  from $M=100$ draws of the distance. Note that the  quantile is maximised by an extremal measure with non-zero and equal mass at two points only, where the points are placed at two opposite corners. This is the case for high confidence levels: in Figure \ref{f7} we include the results for $\alpha \in \{0.75,0.80,0.85,0.95\}$. Note in particular  $\alpha=0.85$, where the non-zero point-masses are placed  at the other pair of opposite corners.
\begin{figure}[h!]
\centering
		\includegraphics[width=6cm,trim=80 30 30 0,clip]{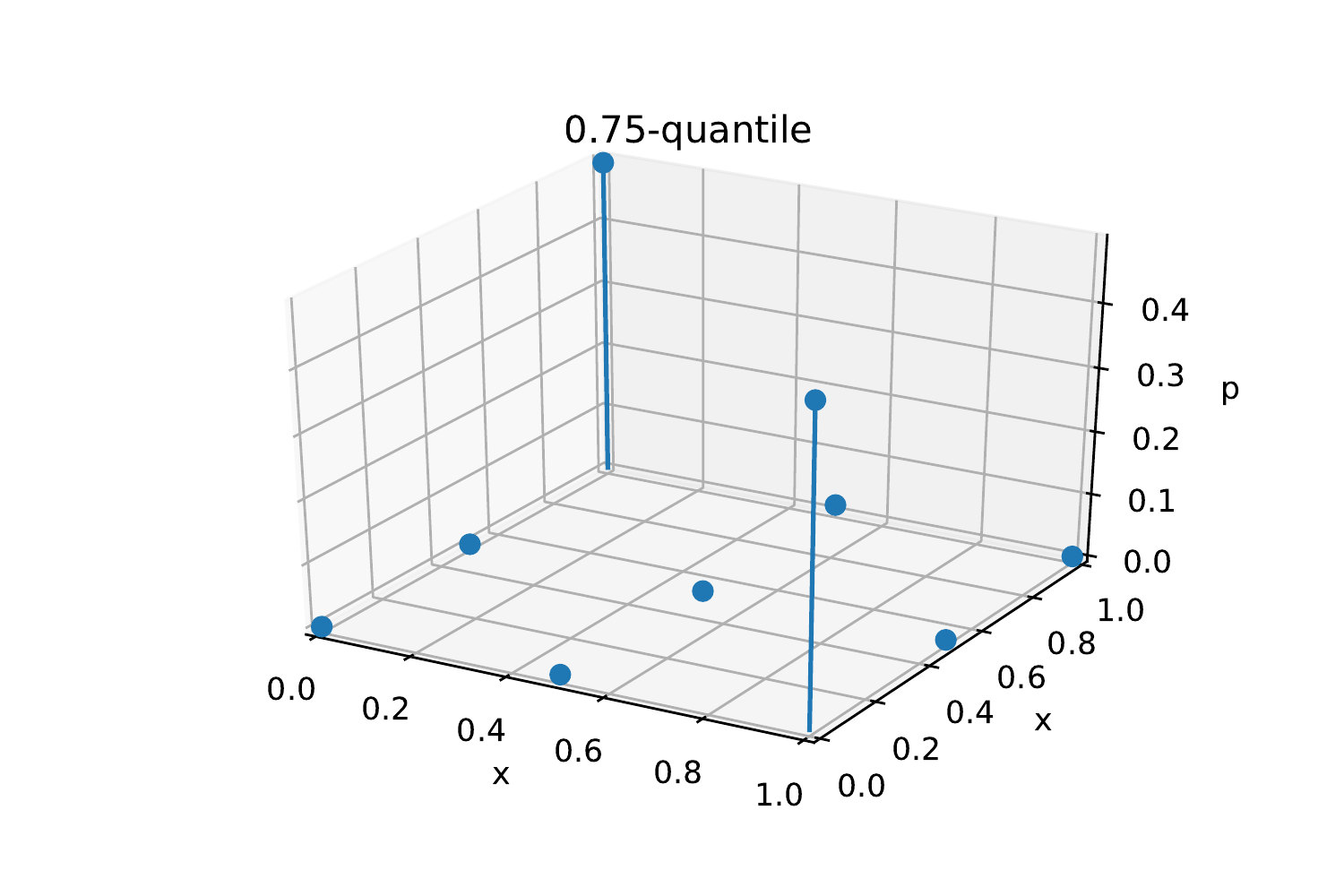} 
		\includegraphics[width=6cm,trim=80 30 30 0,clip]{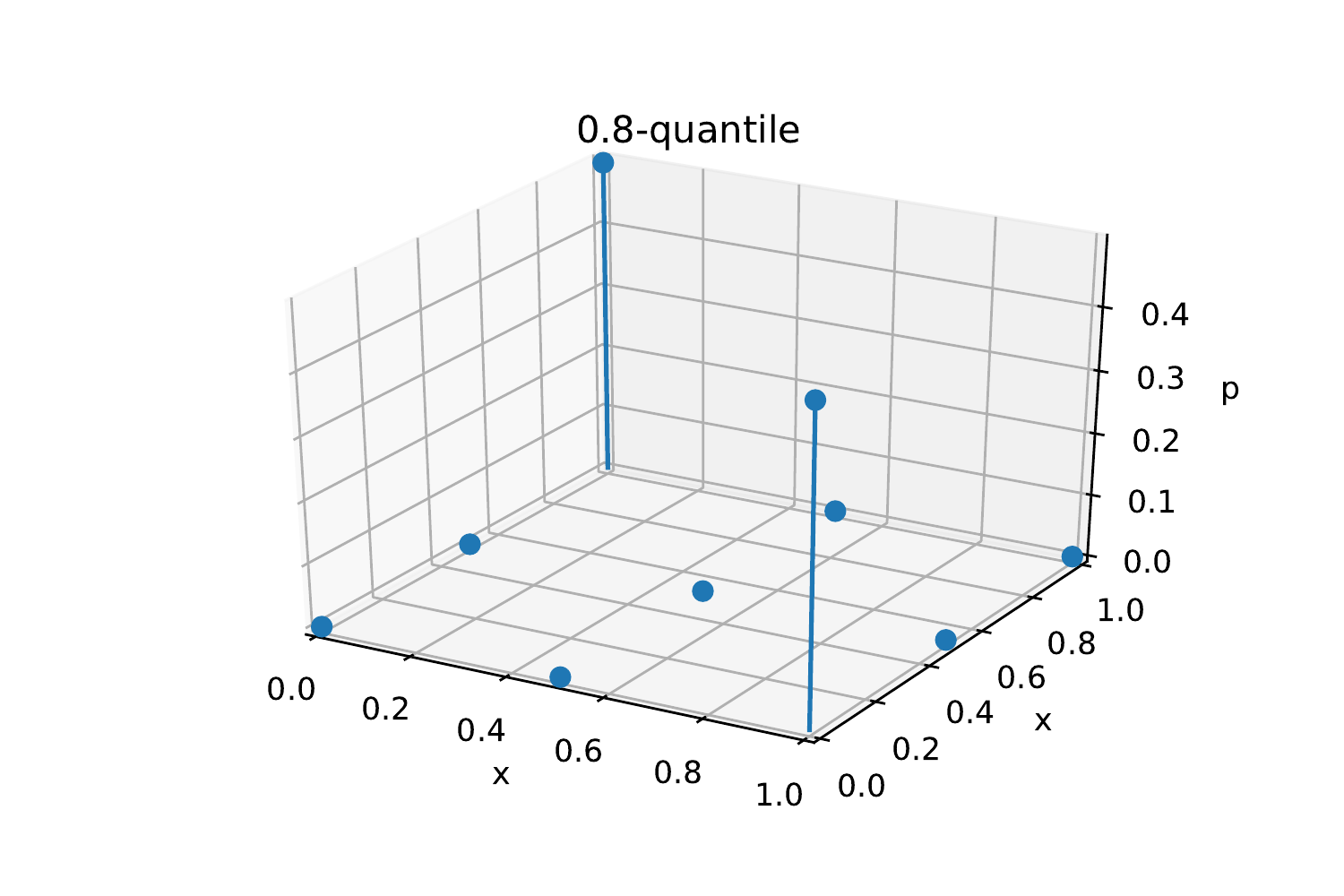} \\
		\includegraphics[width=6cm,trim=80 30 30 0,clip]{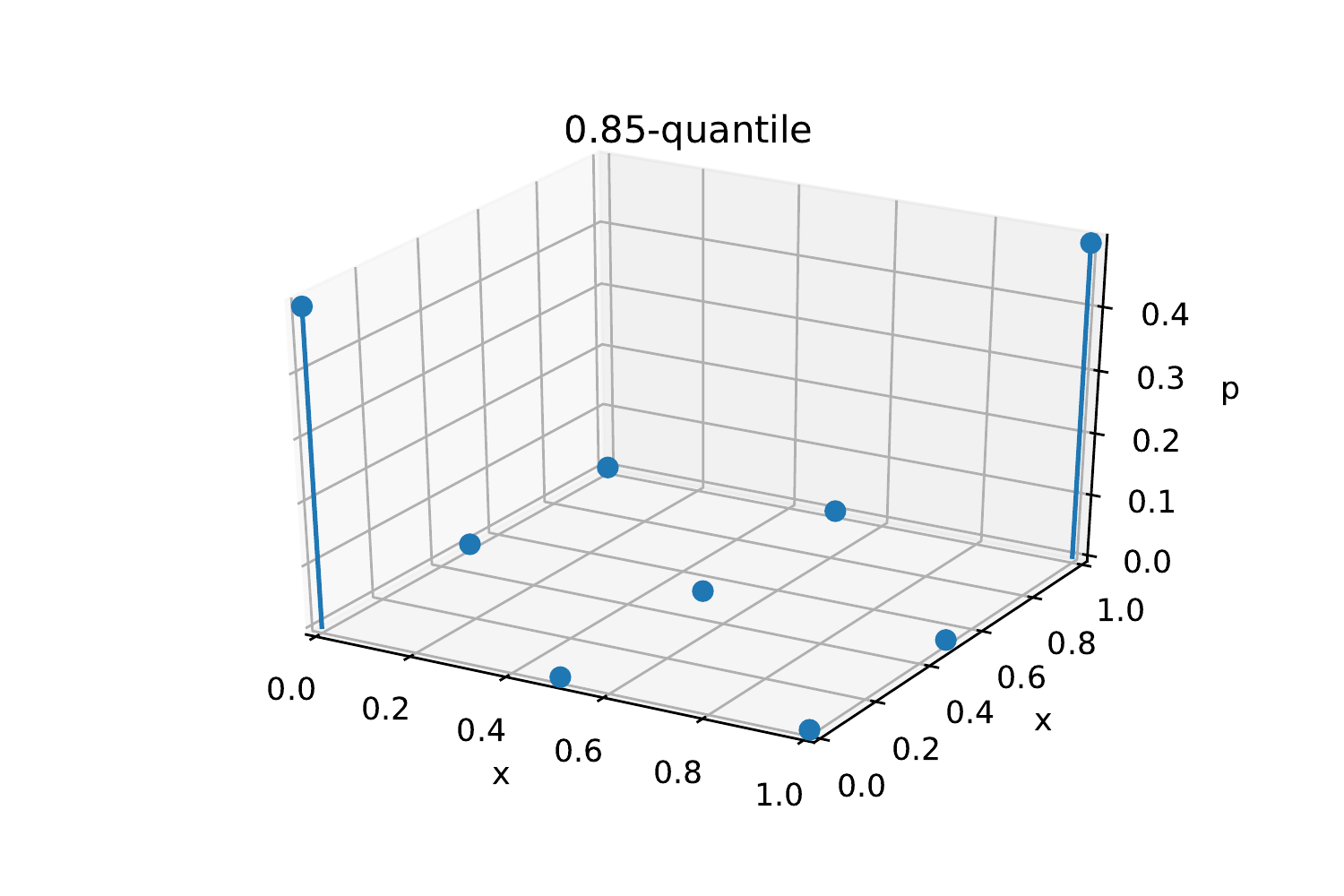} 
		\includegraphics[width=6cm,trim=80 30 30 0,clip]{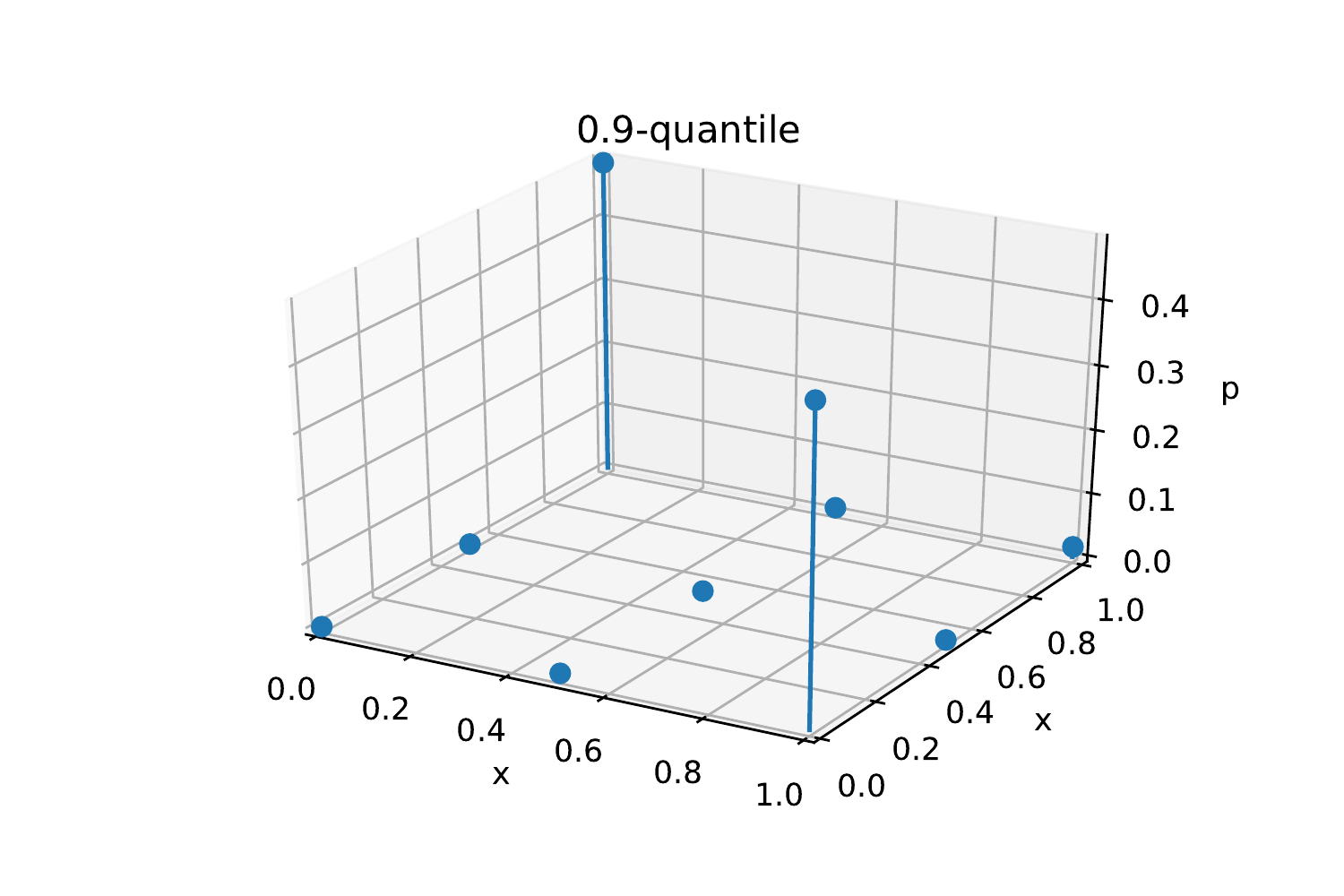}

\caption{Maximising measures of the empirical quantile function of $\mathcal{W}(\P,\hat{\P}_{100})$ at different confidence levels, from $M=100$ draws.}
\label{f7}
\end{figure}
Similarly, for low confidence levels $\alpha \in \{0.05,0.10\}$ the result is shown in Figure \ref{f8}. Here the estimated quantiles are maximised by measures with mass at all points of the support. Although not completely uniform, the apparent deviation from such a measure is likely due to estimation- and numerical errors: either in the empirical distribution, or in the  solution of the optimisation problem (or in both).
\begin{figure}[h!]
\centering
		\includegraphics[width=6cm,trim=80 30 30 30,clip]{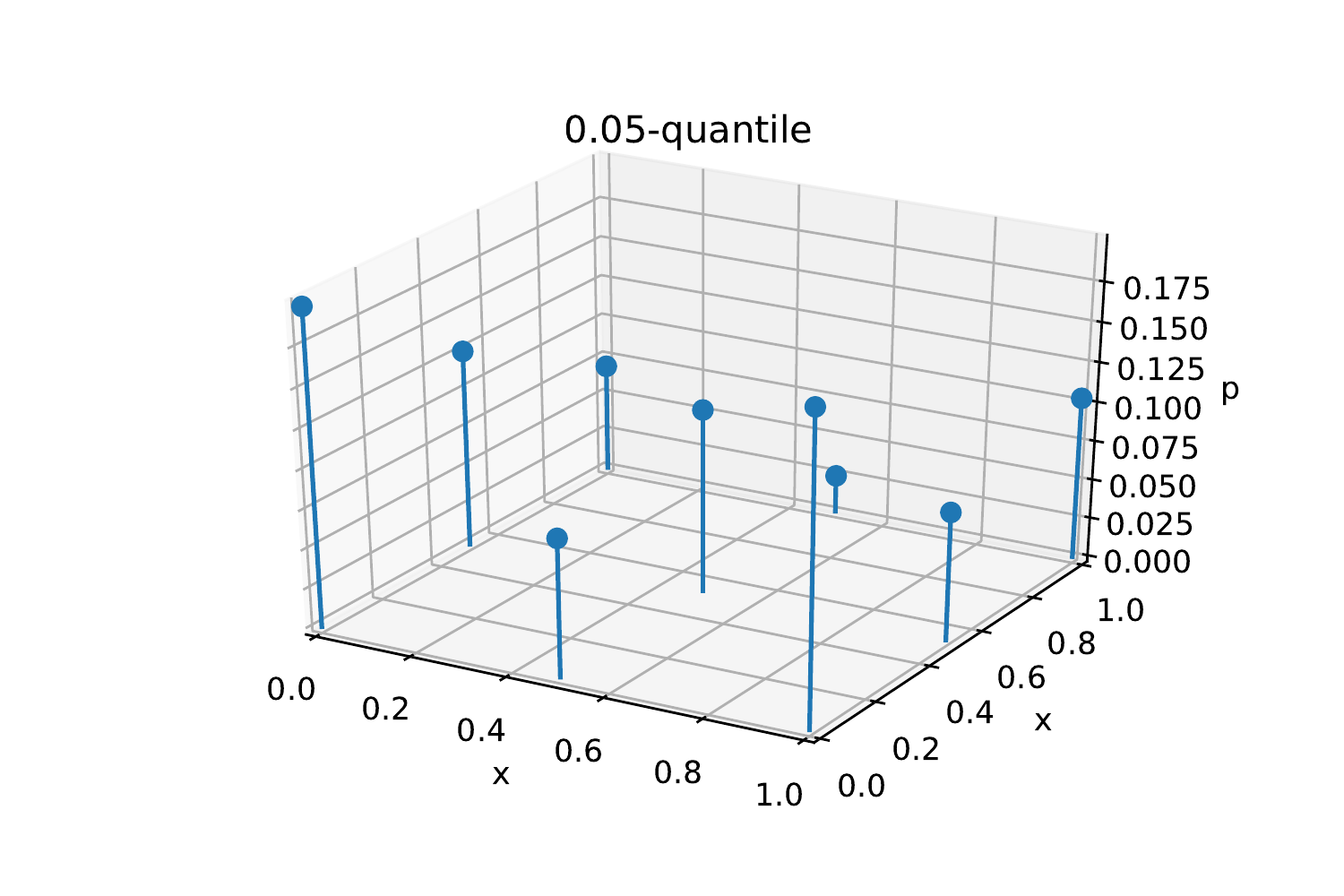}
		\includegraphics[width=6cm,trim=80 30 30 30,clip]{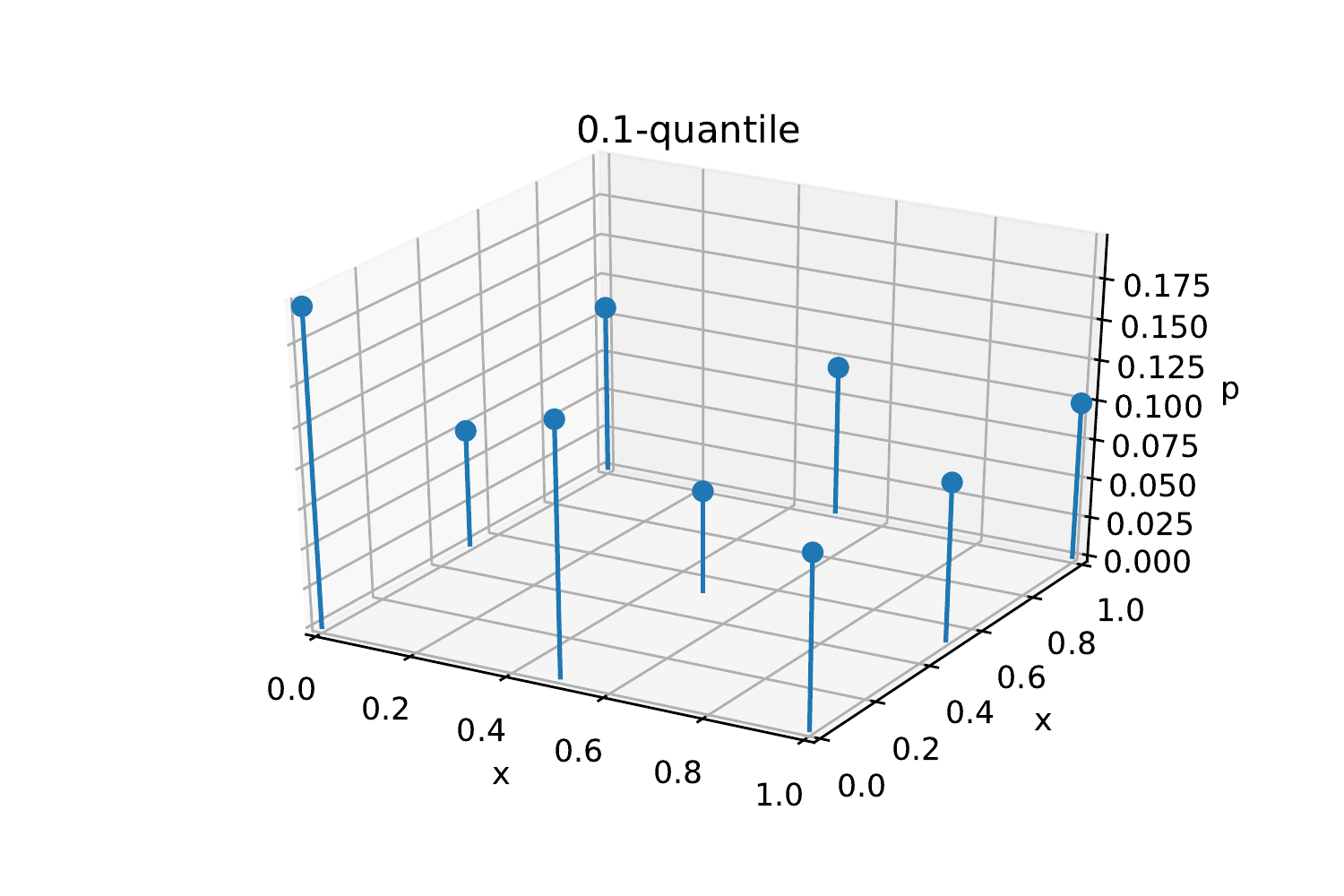} 
\caption{Maximising measures of the empirical quantile function of $\mathcal{W}(\P,\hat{\P}_{100})$ at different confidence levels, from $M=100$ draws.}
\label{f8}
\end{figure}

In any case, from the numerical results in $\R^2$ and comparison with Theorem \ref{Theorem::1} we derive the following conjecture:
\begin{conj}\label{conj:1}
Let $d\ge 1$, $K\subseteq \R^d$ be a convex compact set such that $0<\mu(K)$, where $\mu$ denotes the Lebesgue measure on $\R^d$. Furthermore let $x_0, x_1\in \R^d$ attain
\begin{align*}
\mathrm{argmax}\{\|y_0-y_1\|_{\ell_1} \ : \ y_0,y_1\in K\}
\end{align*}
Let $\alpha\in[0,1]$ and let $\ \mathcal{U}(K)$ denote the uniform distribution on $K$. Take $\epsilon>0$. Then there exists $\lambda=\lambda(\alpha, \epsilon)\in[0,1]$ such that 
\begin{align}\label{eq:main2}
\P^{\lambda}\coloneqq\lambda \frac{\delta_{x_0} +\delta_{x_1}}{2}+(1-\lambda)\ \mathcal{U}(K)
\end{align}
asymptotically maximises the normalised quantile $N^{1/2}F^{-1}_{\mathcal{W}(\hat{\P}_N,\P)}(\alpha)$, i.e. there exists $N_0\in \N$ such that for all $N\ge N_0$  $$\sup_{\P\in\mathfrak{P}(K)}F^{-1}_{\mathcal{W}(\hat{\P}_N,\P)}(\alpha)-F^{-1}_{\mathcal{W}(\hat{\P}^{\lambda}_N,\P^{\lambda})}(\alpha)\le \frac{\epsilon}{N^{1/2}}.$$
Furthermore $\lim_{\alpha\downarrow 0}\lambda=0$ and $\lim_{\alpha\uparrow 1}\lambda=1$. 
\end{conj}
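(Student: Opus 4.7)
The plan is to mirror the three-step argument used to prove Theorem \ref{Theorem::1} (discretisation via Talagrand, optimisation over finitely supported measures on a grid, and a Gaussian limit theorem), adapting each ingredient to a convex compact $K \subset \R^d$.

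First, the discretisation step. Fix a fine grid $\mathbf{x}_n \subset K$ of spacing $O(1/n)$ (for instance, the intersection of $K$ with $n^{-1}\Z^d$) and let $X_i^{\lfloor n\rfloor}$ denote the projection of $X_i$ onto its nearest grid point. As in Section \ref{sec:proof}, the discrepancy $|Z_N - Z_N^{\lfloor n\rfloor}| = N|\mathcal{W}(\hat{\P}_N,\P) - \mathcal{W}(\hat{\P}_N^{\lfloor n\rfloor},\P^{\lfloor n\rfloor})|$ can be bounded via Kantorovich--Rubinstein duality by the supremum of an empirical process indexed by a countable dense subset $\mathcal{F}$ of the unit ball of centred $1$-Lipschitz functions on $K$. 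Since each quantisation error is at most $O(1/n)$ in $\ell_1$-norm, the constants in Lemma \ref{lem:talagrand} satisfy $U \leq \mathrm{diam}_{\ell_1}(K)$ and $V \lesssim N/n$, yielding a $\P^\otimes$-uniform convergence of $N^{-1/2}|Z_N - Z_N^{\lfloor n\rfloor}|$ to zero as $n\to\infty$, uniformly in $N$ and $\P\in\mathfrak{P}(K)$. This reduces the problem to grid-supported measures.

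Next I would identify the extremal measure on $\mathbf{x}_n$. The linear-programming form of $\mathcal{W}$ on a finite grid, namely \eqref{eqq2}--\eqref{eqq1}, expresses the Wasserstein distance as a polyhedral convex function of the centred multinomial vector $M_{ij} = \sqrt{N}(\hat p_{ij} - p_{ij})$. A multinomial central limit theorem then gives $N^{1/2}\mathcal{W}(\hat{\P}_N, \P^{\lfloor n\rfloor}) \Rightarrow \sup_{f \in \text{Lip}_1(\mathbf{x}_n)} |G_{\P^{\lfloor n\rfloor}}(f)|$, where $G_{\P^{\lfloor n\rfloor}}$ is the $\P^{\lfloor n\rfloor}$-Brownian bridge on $\mathbf{x}_n$. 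I would then try to adapt the directional-convexity argument (Lemma \ref{Lemma::3}) to show that componentwise maximisation of the covariance entries, as in Lemma \ref{Lemma::2}, yields second-order stochastic dominance for $\sup_f |G_{\P^{\lfloor n\rfloor}}(f)|$. The optimum should then be pinned down as a mixture of $\mathcal{U}(\mathbf{x}_n)$ and $(\delta_{x_0}+\delta_{x_1})/2$, with $x_0, x_1$ necessarily extreme points of $K$ that realise the $\ell_1$-diameter (since the $\ell_1$-diameter of a convex body is always attained on its set of vertices). The boundary behaviour $\lambda(\alpha) \to 0, 1$ would follow from a tail analysis modelled on Propositions \ref{Prop:2} and \ref{Prop:1}, using a Perron--Frobenius-type bound on the leading eigenvalue of the covariance operator.

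The main obstacle, I expect, lies in the second step: unlike the one-dimensional case, where $\mathcal{W}(\hat{\P}_N,\P)$ decomposes as the explicit $\ell_1$-functional $\sum_i |B(q_i)|/(n-1)$ of the Brownian-bridge values, the supremum $\sup_{f \in \text{Lip}_1} |G_\P(f)|$ in higher dimensions is not obviously directionally convex in the covariance entries --- the set of extremal $1$-Lipschitz functions on $K \subset \R^d$ depends nonlinearly on the geometry of $K$, so the clean reduction to Lemmas \ref{Lemma::2}--\ref{Lemma::3} is no longer automatic. One would either need a sharper Gaussian comparison principle (a Slepian--Sudakov--Fernique-type inequality tailored to Lipschitz envelopes) or a direct symmetrisation argument exploiting the extremality of the $\ell_1$-diameter endpoints. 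A secondary difficulty concerns the rate itself: for $d \geq 2$ the typical scaling of $\mathcal{W}(\hat{\P}_N,\P)$ is $N^{-1/d}$ (with a $\sqrt{\log N}$ correction when $d=2$) rather than $N^{-1/2}$, so although the inequality $\epsilon/N^{1/2}$ in the conjecture remains in principle meaningful, the argument would need a compatible rate comparison for the bound not to collapse into a triviality in dimensions $d\geq 3$.
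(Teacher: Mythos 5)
You should first note that the statement in question is, in the paper itself, only a \emph{conjecture}: the authors explicitly say that a multidimensional extension ``does not seem straightforward'' with their tools, because every step after the Talagrand discretisation rests on the one-dimensional identity $\mathcal{W}(\hat{\P}_N,\P)=\int_0^1|F_{\hat{\P}_N}(t)-F_{\P}(t)|\,dt$ and the resulting explicit representation of the limit as $\sum_i |B(q_i)|/(n-1)$. The paper therefore contains no proof, only the numerical evidence of Section \ref{sec:numerics}. Your proposal mirrors the paper's architecture but does not close the step that the authors identify as the obstruction, and you acknowledge as much; so what you have is a research plan, not a proof. Concretely, the missing idea is the extremality step on the grid: for $d\ge 2$ the $\sqrt{N}$-limit for a grid-supported $\P$ is the optimal value of a transport linear program, equivalently $\sup_{f\in\mathrm{Lip}_1}|G_{\P}(f)|$, and no argument is given that its quantiles at \emph{every} level $\alpha$ are extremised by the family $\lambda(\delta_{x_0}+\delta_{x_1})/2+(1-\lambda)\,\mathcal{U}$. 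Note that even in $d=1$ the paper does not obtain quantile extremality from the directional-convexity comparison (Lemmas \ref{Lemma::2}--\ref{Lemma::3}); that route only yields the weaker second-order (integrated-tail) dominance of the Corollary, while the quantile statement of Proposition \ref{Prop::general} is proved by an explicit density computation and a symmetry/derivative argument that has no evident analogue for the Lipschitz-supremum functional. So ``adapting the directional-convexity argument'' would not suffice even if a componentwise covariance comparison were available, and the identification of $x_0,x_1$ as $\ell_1$-diameter endpoints is likewise asserted rather than derived.

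Second, the rate issue you relegate to a ``secondary difficulty'' is in fact fatal to this proof architecture rather than a refinement. For $d=2$ one has $\E\,\mathcal{W}(\hat{\P}_N,\mathcal{U}(K))\asymp\sqrt{\log N/N}$ and for $d\ge 3$ the rate is $N^{-1/d}$, so the iterated limits $\lim_{n}\lim_{N}\sqrt{N}\,F^{-1}$ over grid measures diverge as the grid is refined; the exchange-of-limits step corresponding to \eqref{eq:1} and the identification of the continuum optimiser $\P^{\lambda}$ as the weak limit of grid optimisers $\P^{\lambda^n}$ therefore cannot be carried over. Moreover the conjecture demands a gap of size $\epsilon N^{-1/2}$ between quantities whose own magnitude is of order $N^{-1/d}\gg N^{-1/2}$, which cannot be extracted from a discretisation estimate of the form $N^{-1/2}|Z_N-Z_N^{\lfloor n\rfloor}|\to 0$ combined with a fixed-grid CLT. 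Your Talagrand step (with $U$ bounded by the $\ell_1$-diameter and $V\lesssim N/n$) is plausible and matches the part of the argument the authors themselves expect to generalise, but the two central steps --- grid extremality in $d\ge 2$ and the limit/rate comparison --- remain open, exactly as in the paper.
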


A proof of such a multidimensional extension of Theorem \ref{Theorem::1} in the spirit of Conjecture \ref{conj:1} does not seem straightforward to us, at least given the tools we use: While Section \ref{sec:proof} is mainly concerned with controlling the quantisation error and should have a multidimensional analogue, we rely on a representation of $\mathcal{W}(\hat{\P}_N,\P)$ through the integral over cumulative distribution functions $F_{\hat{\P}_N}$ and $F_{\P}$ in the course of the whole paper. This goes back to the theory of optimal transport, and seems to be a fundamentally one-dimensional result.\\ For the same reasons our method of proof only covers the case of the $1$-Wasserstein distance, while we expect a similar result to hold for the $p$-Wasserstein distance for $p>1$ also. We refer to Johnson \& Samworth   \cite[Cor. 2.7(b), p.8]{samworth2004convergence} for a convergence result corresponding to Lemma \ref{Lemma::gine} for $p>1$ for finitely supported measures.

  \section{Applications \& outlook}\label{sec:applications}
For $\P=(\delta_0+\delta_1)/2$ clearly  $\int_0^1 |B(F_{\P}(t))|dt =|B(1/2)|$ and in particular
\begin{align*}
\P^\otimes(|B(1/2)|\ge t)&=2(1-\P^\otimes(B(1/2)\le t))=2-2\Phi(2t),
\end{align*}
where $\Phi$ denotes the cumulative distribution function of a standard normal random variable. Thus, intuitively, Theorem \ref{Theorem::1} shows that for \emph{any} measure $\P$, we have the approximate relation
\begin{align*}
(\P)^\otimes(\mathcal{W}(\hat{\P}_N, \P)\ge t)&=\P^\otimes\left(N^{1/2}\mathcal{W}(\hat{\P}_N, \P)\ge N^{1/2}t\right)\\
&\lesssim(\P^1)^\otimes\left(|B(1/2)| \ge N^{1/2}t\right)
 =  2-2\Phi\left(2\sqrt{N}t\right)
\end{align*}
for large $t\ge 0$ and large $N\in \mathbb{N}$.

On the other hand, for small $t\ge 0$ we are interested in $\P=\mathcal{U}([0,1])$ and so we need to compute the distribution of  $\int_0^1 |B(F_{\P}(t))|dt=\int_0^1 |B(t)|dt$. We refer to  Tolmatz \cite{tolmatz2000asymptotics,tolmatz2002distribution}, where the distribution of this integral has been studied.

These observations allow us to compute an explicit non-parametric confidence region for a sampling distribution $\P$, at least in the case $\alpha \approx 1$.

\begin{Theorem}\label{Theorem::Conf}
Let $X_1, \dots, X_N$ be i.i.d. samples from an unknown measure $\P \in \mathfrak{P}([0,1])$, with empirical distribution $\hat{\P}_N(\omega)$. For confidence levels $\alpha\to 1$, the random set of measures
\[\hat C_N(\omega;\alpha) = \left\{\P'\in \mathfrak{P}([0,1]): \mathcal{W}(\hat \P_N(\omega), \P') \le kN^{-1/2}\right\}\]
where $$k = \frac{\Phi^{-1}\left(\frac{1+\alpha}{2}\right)}{2}$$ is an asymptotic confidence region for $\P$; in the sense that
\[\liminf_{N\to \infty}\P^\otimes\Big(\P\in \hat C_N(\omega;\alpha)\Big)\geq 1-g(\alpha),\]
where $g:[0,1]\to [0,1]$ is a continuous function with  $g(\alpha) \sim 1-\alpha$ as $\alpha\to 1$.
\end{Theorem}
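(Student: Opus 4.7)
The plan is to combine the distributional convergence of Lemma \ref{Lemma::gine} with the worst-case tail comparison established (in the finite-support regime) in Remark \ref{rem:quantileapprox}, transported to the continuous setting via the quantisation machinery of Section \ref{sec:proof}.

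Unpacking the event, $\{\P \in \hat C_N(\omega;\alpha)\} = \{N^{1/2}\mathcal{W}(\hat\P_N,\P)\le k\}$. For each non-degenerate $\P \in \mathfrak{P}([0,1])$, Lemma \ref{Lemma::gine} gives $N^{1/2}\mathcal{W}(\hat\P_N,\P)\Rightarrow W_\P := \int_0^1 |B(F_\P(t))|\,dt$, and the limiting law is continuous (it is an $L^1$-functional of a non-trivial Brownian bridge path), so weak convergence at the continuity point $k$ upgrades to pointwise CDF convergence:
\[\lim_{N\to\infty}\P^\otimes(\P \in \hat C_N) = \P(W_\P \le k) = 1 - \P(W_\P > k)\]
(the degenerate case $\P = \delta_{x_0}$ is trivial since $\mathcal{W}(\hat\P_N,\P)\equiv 0$). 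It therefore suffices to establish the uniform tail bound
\[\sup_{\P \in \mathfrak{P}([0,1])} \P(W_\P > k_\alpha) \le g(\alpha), \qquad g(\alpha)\sim 1-\alpha,\]
where $k_\alpha = \Phi^{-1}((1+\alpha)/2)/2$.

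The key observation is that $k_\alpha$ is precisely the $\alpha$-quantile of $W_{\P^1} = |B(1/2)|$ associated with the extremal measure $\P^1 = (\delta_0+\delta_1)/2$. Theorem \ref{Theorem::1} and its finite-support ancestor Proposition \ref{Prop::general} tell us that the supremum of $F^{-1}_{W_\P}(\alpha)$ over $\P \in \mathfrak{P}([0,1])$ is attained along the one-parameter family $\{\P^\lambda\}_{\lambda\in[0,1]}$, with optimiser $\lambda(\alpha) \to 1$ as $\alpha \to 1$. In the finite-support regime, Remark \ref{rem:quantileapprox} sharpens this into the quantitative tail comparison
\[1 - F_{\mathfrak{B}_n}(t;\bar\lambda(t)) \le \bar C_n(t)\,\bigl(1 - F_{\mathfrak{B}_2}(t;1)\bigr),\qquad \bar C_n(t)\to 1\ \text{as}\ t\to\infty,\]
in which $F_{\mathfrak{B}_2}(\,\cdot\,;1)$ is precisely the CDF of $|B(1/2)|$. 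Combining this comparison with the quantisation machinery of Section \ref{sec:proof}---in particular pushing the $N$-uniform Talagrand bound \eqref{eq:talagrand} through the limit $N\to\infty$ to transfer tails between $W_\P$ and $W_{\P^{\lfloor n\rfloor}}$---should yield a single continuous $\bar C$ with $\bar C(k)\to 1$ such that
\[\sup_{\P \in \mathfrak{P}([0,1])}\P(W_\P > k) \le \bar C(k)\,\P\bigl(|B(1/2)| > k\bigr).\]
Setting $g(\alpha) := \min\{\bar C(k_\alpha)(1-\alpha),\,1\}$ near $\alpha=1$ and extending continuously to $[0,1]$ with $g(0)=1$ gives $g(\alpha)/(1-\alpha) = \bar C(k_\alpha) \to 1$, since $k_\alpha\to\infty$, i.e.\ $g(\alpha)\sim 1-\alpha$.

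The main obstacle is the transfer of Remark \ref{rem:quantileapprox} from the finite-support case (with fixed support size $n$) to arbitrary $\P \in \mathfrak{P}([0,1])$. The comparison constant $\bar C_n(t)$ depends on $n$ through the eigenvalues $\alpha_j(\lambda)$ of $\pmb\Sigma_n^{p_1,\dots,p_n}$, and one must verify it remains uniformly controlled as $n\to\infty$. The decisive structural fact is that at the extremal $\lambda=1$ the principal eigenvalue equals $(n-1)/4$ for every $n$, producing the exact leading tail behaviour $e^{-2t^2}$ of $|B(1/2)|$; the lower-order corrections associated with $\lambda<1$ vanish as $\alpha\to 1$ by the second conclusion of Theorem \ref{Theorem::1}. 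Coupling this with the fact that the Talagrand estimate \eqref{eq:talagrand} is uniform in both $\P$ and $N$ should then suffice to promote the finite-support tail inequality to one valid for all $\P\in\mathfrak{P}([0,1])$, completing the proof.
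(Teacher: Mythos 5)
Your proposal is correct and follows essentially the same route as the paper: the quantitative tail comparison of Remark \ref{rem:quantileapprox} against $|B(1/2)|$, the explicit Gaussian tail $2-2\Phi(2k)=1-\alpha$ at $k=\Phi^{-1}((1+\alpha)/2)/2$, transfer to general $\P\in\mathfrak{P}([0,1])$ via the Section \ref{sec:proof} quantisation/Talagrand machinery, and the definition $g(\alpha)=\bar C(k)(1-\alpha)$. The $n$-uniformity of the comparison constant that you flag as the main obstacle is treated no more explicitly in the paper, which likewise invokes the approximation results of Section \ref{sec:proof} at that step.
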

\begin{proof}
Let us first fix $n \in \N$ and let us recall the finitely supported measures $\P^{\bar{\lambda}(t,n)}$ from the proof of Theorem \ref{Theorem::1}.
From Remark \ref{rem:quantileapprox}, there is a continuous function $\bar C\ge 1$ with $\bar C(t)\to 1$ as $t\to \infty$ such that, for all $t\ge 0$ and all $\alpha\in[0,1]$,
\begin{align}
\label{eq:sam 1}
\begin{split}&\lim_{N\to \infty}(\P^{\bar{\lambda}(t,n)})^\otimes\big(\mathcal{W}(\hat{\P}_N(\omega), \P^{\bar{\lambda}(t,n)})\ge tN^{-1/2}\big)\\
 &\leq\lim_{N\to \infty} \bar C(t)(\P^1)^\otimes\big(\mathcal{W}(\hat{\P}_N(\omega), \P^1)\ge tN^{-1/2}\big).
\end{split}
\end{align}
As described above, we also know the distribution of $\mathcal{W}(\hat \P_N(\omega), \P^1)$ under $(\P^1)^\otimes$, that is,
\begin{align}\label{eq:sam 2}
\lim_{N\to \infty}(\P^1)^\otimes\left(\mathcal{W}(\hat{\P}_N(\omega), \P^1)\ge tN^{-1/2}\right)  =  2-2\Phi(2t).
\end{align}
Define
\[g(\alpha) := \bar C\left(\frac{\Phi^{-1}\left(\frac{1+\alpha}{2}\right)}{2}\right)(1-\alpha)=\bar C(k)~(2-2\Phi(2k)),\]
where $k$ is as in the statement of the theorem. Observe that $g$ satisfies $g(\alpha) \sim 1-\alpha$ as $\alpha \to 1$.
For large $N$,  we know from \eqref{eq:sam 1} and \eqref{eq:sam 2} that $k$ satisfies
 \[\lim_{N\to \infty}(\P^{\bar{\lambda}(k,n)})^\otimes\left(\mathcal{W}\left(\hat\P_N(\omega), \P^{\bar{\lambda}(k,n)}\right)\ge kN^{-1/2}\right)\leq g(\alpha) .\]
 From the approximation results in Section \ref{sec:proof}, for any $\P$, it follows that 
 \begin{align*}
& \limsup_{N\to \infty}\P^\otimes\Big(\P\not\in \hat C_N(\omega, \alpha)\Big)\\
&= \limsup_{N\to \infty}\P^\otimes\Big(\mathcal{W}(\hat{\P}_N(\omega), \P)\ge kN^{-1/2}\Big)\\
 &\le \limsup_{n \to \infty}\limsup_{N\to \infty}(\P^{\bar{\lambda}(k,n)})^\otimes\left(\mathcal{W}\left(\hat{\P}_N(\omega), \P^{\bar{\lambda}(k,n)}\right)\ge kN^{-1/2}\right)\\
& \leq g(\alpha),
 \end{align*}
  as desired.

%
%
\end{proof}

It is worth considering how these intervals compare with classical parametric estimation. 
\begin{Example}
 Let $X$ and $X_1,..., X_N$ be iid random variables from some distribution on $[0,1]$. For a function $f\in \mathrm{Lip}_1$ (that is $|f(x)-f(y)|\leq |x-y|$), define $Y= f(X)$ (By scaling, similar results hold for $K$-Lipschitz functions for any $K>0$).  Observe that $\mathrm{Var}(Y)\leq 1/4$ (from Popoviciu's inequality for variances), and that this bound is attainable for appropriate choices\footnote{In particular, Popoviciu's inequality is attained when $X$ has distribution of the form $(\delta_0 + \delta_{1})/2$ and $f(x) = x+y$ for some $y\in \R$. This is the same distribution achieving our extreme quantiles in Theorem \ref{Theorem::1} for large $\alpha$.} of $\P$ and $f$.
 
A classical confidence bound for $\E[Y]$ (using this worst-case bound on the variance and the central limit theorem) given observations $X_1,...,X_N$ is 
\[\frac{1}{N}\sum_{i=1}^N f(X_i) - \frac{\Phi^{-1}((1+\alpha)/2)}{2\sqrt{N}}\leq \E_\P[Y]\leq \frac{1}{N}\sum_{i=1}^N f(X_i) + \frac{\Phi^{-1}((1+\alpha)/2)}{2\sqrt{N}}.\]

Using an approach based on the Wasserstein distance $\mathcal{W}$, the Kantorovich--Rubinstein duality formula states that, for any $\P'\in \mathfrak{P}([0,1])$,
\[\mathcal{W}(\P', \hat\P_N) = \sup_{f \in \mathrm{Lip_1}}\Big\{\big|\E_{\P'}[f(X)] - \E_{\hat\P_N}[f(X)]\big|\Big\}.\]
So, for our specific choice of $f$, we obtain the corresponding bounds
\[\E_{\hat\P_N}[f(X)] - \mathcal{W}({\P'}, \hat\P_N)\leq \E_{\P'}[Y] \leq  \E_{\hat\P_N}[f(X)] + \mathcal{W}({\P'}, \hat\P_N).\]
Theorem \ref{Theorem::Conf} states that we have approximately $\alpha$-confidence that the true sampling distribution $\P$ lies within the region $\hat C = \{{\P'}:\mathcal{W}({\P'}, \hat\P_N)\le k/\sqrt{N}\}$. Substituting in the value of $k$ and evaluating $\E_{\hat\P_N}[f(X)]$, we obtain the confidence bounds
\begin{equation}\label{eq:confidence}\frac{1}{N}\sum_{i=1}^N f(X_i)  - \frac{\Phi^{-1}((1+\alpha)/2)}{2\sqrt{N}}\leq \E_{\P}[Y] \leq  \frac{1}{N}\sum_{i=1}^N f(X_i)  + \frac{\Phi^{-1}((1+\alpha)/2)}{2\sqrt{N}}.
\end{equation}

These non-parametric bounds are identical to those obtained through classical parametric approximations for every choice of $f$. On the one hand, by deriving them using a non-parametric approach, we obtain a bound which is uniform in  $f\in \mathrm{Lip}_1$, that is, if $\hat C_f$ denotes the interval given by \eqref{eq:confidence},  we have 
\[\P^\otimes\Big( \E_{\P}[f(X)]\in \hat C_f(\omega) \text{ for all }f\in \mathrm{Lip}_1\Big) \geq \alpha-\epsilon_\alpha,\]
whereas the classical approach only gives us an interval for a specific choice of $f$.  It is somewhat surprising that the interval computed in this non-parametric manner, for a bound which is uniform in $f$, is no wider than that obtained earlier for a specific $f$. In particular, no Bonferroni-type correction of the interval width is required (cf. for example, the classical methods of Dunn \cite{dunn61}).

On the other hand, the Wasserstein distance derived bound is only asymptotic for large $\alpha$ (although our numerical results suggest this approximation is very good for $\alpha>0.7$) and large $N$ (this is also true for those described in the parametric case, as they are based on the central limit theorem). Tighter bounds can be obtained in the parametric case by estimating the variance of $Y$ from observations.
\end{Example}


\section*{Acknowledgements}
Samuel Cohen and Martin Tegn\'{e}r thank the Oxford--Man Institute for research support. Samuel Cohen acknowledges the support of The Alan Turing Institute under the Engineering and Physical Sciences Research Council grant EP/N510129/1. Johannes Wiesel and Samuel Cohen gratefully acknowledge support from the European Research Council under the European Union's Seventh Framework Programme (FP7/2007-2013) / ERC grant agreement no. 335421. Johannes Wiesel furthermore acknowledges support of the German Academic Scholarship foundation and of the Erwin Schr\"odinger Institute for Mathematics and Physics.
\bibliographystyle{plain}
\bibliography{bib}
\end{document}